\newtheorem{theorem}{Theorem}[section]
\newtheorem{lemma}[theorem]{Lemma}
\newtheorem{corollary}[theorem]{Corollary}
\theoremstyle{definition}
\theoremstyle{remark}
\newtheorem{remark}[theorem]{Remark}
\DeclareMathOperator*{\Null}{null}
\DeclareMathOperator*{\Range}{range}
\DeclareMathOperator*{\rank}{rank}
\DeclareMathOperator*{\tr}{tr}
\let\s=\scriptscriptstyle
\numberwithin{equation}{section}
\begin{document}

\title[Two-level methods with general coarse solvers]{Convergence analysis of two-level methods with general coarse solvers}

\author{Xuefeng Xu}
\address{Department of Mathematics, Purdue University, West Lafayette, IN 47907, USA}
\email{xuxuefeng@lsec.cc.ac.cn; xu1412@purdue.edu}


\subjclass[2010]{Primary 65F08, 65F10, 65N15, 65N55; Secondary 15A18, 65N75}



\keywords{Two-level methods, convergence analysis, error estimate, nonlinear solvers, randomized methods}

\begin{abstract}
Multilevel methods are among the most efficient numerical methods for solving large-scale linear systems that arise from discretized partial differential equations. The fundamental module of such methods is a two-level procedure, which consists of compatible relaxation and coarse-level correction. Regarding two-level convergence theory, most previous works focus on the case of exact (Galerkin) coarse solver. In practice, however, it is often too costly to solve the Galerkin coarse-level system exactly when its size is relatively large. Compared with the exact case, the convergence theory of inexact two-level methods is of more practical significance, while it is still less developed in the literature, especially when nonlinear coarse solvers are used. In this paper, we establish a general framework for analyzing the convergence of inexact two-level methods, in which the coarse-level system is solved approximately by an inner iterative procedure. The framework allows us to use various (linear, nonlinear, deterministic, randomized, or hybrid) solvers in the inner iterations, as long as the corresponding accuracy estimates are available.
\end{abstract}

\maketitle



\section{Introduction}

Multilevel methods are among the most powerful numerical techniques for solving large-scale linear systems that arise from discretized partial differential equations. These types of methods can be obtained by a change of basis with multilevel structure or a hierarchy of grids, such as the hierarchical basis~\cite{Yserentant1986,Bank1988} and multigrid methods~\cite{Hackbusch1985,Briggs2000,Trottenberg2001,XZ2017}. Most of multilevel methods share some common features, e.g., they are based on a recursive call of two-level procedure, which is a combination of two complementary error-reduction processes: \textit{compatible relaxation} and \textit{coarse-level correction}. Compatible relaxation, originated with Brandt~\cite{Brandt2000}, is a modified scheme that keeps the coarse variables invariant, which is a useful tool for selecting coarse-grids in algebraic multigrid methods~\cite{Falgout2004}. The purpose of coarse-level correction is to further reduce error components that cannot be effectively eliminated by the compatible relaxation. The optimality of multilevel methods will be achieved when these two processes complement each other very well.

Consider solving the linear system
\begin{equation}\label{system}
A\mathbf{u}=\mathbf{f},
\end{equation}
where $A\in\mathbb{R}^{n\times n}$ is symmetric positive definite (SPD), $\mathbf{u}\in\mathbb{R}^{n}$, and $\mathbf{f}\in\mathbb{R}^{n}$. Given an initial guess $\mathbf{u}^{(0)}\in\mathbb{R}^{n}$, we perform the following compatible relaxation process:
\begin{equation}\label{CR}
\mathbf{u}^{(k+1)}=\mathbf{u}^{(k)}+SM_{\rm s}^{-1}S^{T}\big(\mathbf{f}-A\mathbf{u}^{(k)}\big) \quad k=0,1,\ldots,
\end{equation}
where $S\in\mathbb{R}^{n\times n_{\rm s}}$ ($n_{\rm s}<n$) is of full column rank and $M_{\rm s}\in\mathbb{R}^{n_{\rm s}\times n_{\rm s}}$ is a nonsingular matrix such that $M_{\rm s}+M_{\rm s}^{T}-A_{\rm s}$ is SPD ($A_{\rm s}:=S^{T}AS$). The relaxation method~\eqref{CR} is essentially a smoothing process, in which $M_{\rm s}$ is treated as a \textit{local} smoother. We stress that $M_{\rm s}$ is not restricted to simple types (e.g., the Jacobi and Gauss--Seidel methods), because its size is relatively small. In particular, if $M_{\rm s}=A_{\rm s}$, we deduce from~\eqref{CR} that
\begin{displaymath}
\mathbf{u}-\mathbf{u}^{(k+1)}=\big(I-SA_{\rm s}^{-1}S^{T}A\big)\big(\mathbf{u}-\mathbf{u}^{(k)}\big).
\end{displaymath}
Note that $I-SA_{\rm s}^{-1}S^{T}A$ is an $A$-orthogonal projection along (or parallel to) $\Range(S)$ onto $\Null(S^{T}A)$. If $\Range(S)$ contains most of high-frequency (i.e., oscillatory) error, then these components will be eliminated effectively. The focus of the smoothing process~\eqref{CR} is on high-frequency error components. The remaining low-frequency (i.e., smooth) error will be further reduced by the coarse-level correction process. Let $\mathbf{u}^{(\ell)}\in\mathbb{R}^{n}$ be an approximation to $\mathbf{u}\equiv A^{-1}\mathbf{f}$ (e.g., $\mathbf{u}^{(\ell)}$ is generated from~\eqref{CR}), and let $P\in\mathbb{R}^{n\times n_{\rm c}}$ ($n_{\rm c}<n$) be a prolongation (or interpolation) matrix with full column rank. Then, the (exact) coarse-level correction can be described as follows:
\begin{equation}\label{correction}
\mathbf{u}^{(\ell+1)}=\mathbf{u}^{(\ell)}+PA_{\rm c}^{-1}P^{T}\big(\mathbf{f}-A\mathbf{u}^{(\ell)}\big),
\end{equation}
where $A_{\rm c}:=P^{T}AP\in\mathbb{R}^{n_{\rm c}\times n_{\rm c}}$ is known as the \textit{Galerkin coarse-level matrix}.

Two-level methods can be obtained by combining~\eqref{CR} and~\eqref{correction}. A symmetric two-level scheme (i.e., the presmoothing and postsmoothing processes are performed in a symmetric way) for solving~\eqref{system} can be described by Algorithm~\ref{alg:TL}. It is called an \textit{exact} two-level method, because the Galerkin coarse-level system $A_{\rm c}\mathbf{e}_{\rm c}=\mathbf{r}_{\rm c}$ is solved exactly. Algorithm~\ref{alg:TL} covers several well-known numerical algorithms, such as two-grid methods~\cite{Falgout2005,Vassilevski2008}, two-level hierarchical basis methods~\cite{Yserentant1986,Bank1988,Vassilevski2008}, and alternating projection methods~\cite{vonNeumann1950,XZ2002}. One may easily check that a sufficient and necessary condition for Algorithm~\ref{alg:TL} to be convergent in $A$-norm is
\begin{equation}\label{rank}
\rank(S \,\ P)=n,
\end{equation}
that is, for any $\mathbf{v}\in\mathbb{R}^{n}$, there exist $\mathbf{v}_{\rm s}\in\mathbb{R}^{n_{\rm s}}$ and $\mathbf{v}_{\rm c}\in\mathbb{R}^{n_{\rm c}}$ such that $\mathbf{v}=S\mathbf{v}_{\rm s}+P\mathbf{v}_{\rm c}$. Such a decomposition is not necessarily a direct sum. Under the condition~\eqref{rank}, an identity for the convergence factor of Algorithm~\ref{alg:TL} was presented in~\cite[Theorem~4.1]{Falgout2005}; see~\cite{XZ2002,XZ2017} for an abstract version.

\begin{algorithm}[!htbp]

\caption{\ \textbf{Exact two-level method}}\label{alg:TL}

\smallskip

\begin{algorithmic}[1]

\State Presmoothing: $\mathbf{u}^{(1)}\gets\mathbf{u}^{(0)}+SM_{\rm s}^{-1}S^{T}\big(\mathbf{f}-A\mathbf{u}^{(0)}\big)$

\smallskip

\State Restriction: $\mathbf{r}_{\rm c}\gets P^{T}\big(\mathbf{f}-A\mathbf{u}^{(1)}\big)$

\smallskip

\State Coarse-level correction: $\mathbf{e}_{\rm c}\gets A_{\rm c}^{-1}\mathbf{r}_{\rm c}$

\smallskip

\State Prolongation: $\mathbf{u}^{(2)}\gets\mathbf{u}^{(1)}+P\mathbf{e}_{\rm c}$

\smallskip

\State Postsmoothing: $\mathbf{u}_{\rm TL}\gets\mathbf{u}^{(2)}+SM_{\rm s}^{-T}S^{T}\big(\mathbf{f}-A\mathbf{u}^{(2)}\big)$

\end{algorithmic}

\end{algorithm}

In practice, however, it is often too costly to solve the coarse-level system exactly, especially when its size is relatively large. Instead, without essential loss of convergence speed, one may find an approximate solution to the linear system $A_{\rm c}\mathbf{e}_{\rm c}=\mathbf{r}_{\rm c}$. A typical strategy is to apply Algorithm~\ref{alg:TL} recursively in the correction steps. More generally, one can replace $A_{\rm c}$ by an SPD matrix $B_{\rm c}\in\mathbb{R}^{n_{\rm c}\times n_{\rm c}}$. The resulting inexact two-grid methods have been analyzed in~\cite{Notay2007,XXF2021,XXF2022}. For real-world problems, some nonlinear methods, such as the conjugate gradient~\cite{Hestenes1952} and generalized minimal residual methods~\cite{Saad1986}, have been applied to solve the coarsest-grid systems in multigrid algorithms (see, e.g.,~\cite{Babich2010,Brannick2016,Brower2018,Whyte2020}), even though the corresponding theoretical analyses are scarce. Indeed, in the correction step, one should focus on the output of a coarse solver (namely, an approximation to $A_{\rm c}^{-1}\mathbf{r}_{\rm c}$) instead of the solver itself. Usually, it is difficult to know the explicit form of a nonlinear coarse solver, which approximates $A_{\rm c}^{-1}$ in an \textit{implicit} way. As a result, some key quantities, like the extreme eigenvalues of $B_{\rm c}^{-1}A_{\rm c}$, involved in the aforementioned inexact two-grid theory will be unavailable.

In this paper, we are concerned with the convergence of two-level methods with general coarse solvers. The coarse-level system $A_{\rm c}\mathbf{e}_{\rm c}=\mathbf{r}_{\rm c}$ is solved approximately by an inner iterative procedure, in which linear, nonlinear, deterministic, randomized, or hybrid solvers can be used, as long as the corresponding accuracy estimates are available. Under a weak assumption on the accuracy of coarse solvers, we establish a general convergence theory for inexact two-level methods. The convergence theory extends the existing ones and can be used to guide the design of practical multilevel algorithms. Furthermore, it enables us to develop new algorithms combined with randomization techniques.

The rest of this paper is organized as follows. In Section~\ref{sec:pre}, we propose an inexact two-level algorithm and review the convergence theory of Algorithm~\ref{alg:TL}. In Section~\ref{sec:conv}, we establish a general framework for analyzing the convergence of inexact two-level methods. In Section~\ref{sec:ex}, we provide two representative examples of coarse solver, followed by discussions on how they fit into the proposed framework. In Section~\ref{sec:con}, we give some concluding remarks.

\section{Preliminaries} \label{sec:pre}

In this section, we introduce an inexact two-level algorithm for solving~\eqref{system} and an identity for the convergence factor of Algorithm~\ref{alg:TL}. For convenience, we first list some notation used in the subsequent discussions.

\begin{itemize}

\item[--] $I_{n}$ denotes the $n\times n$ identity matrix (or $I$ when its size is clear from context).

\item[--] $\lambda_{\min}(\cdot)$, $\lambda_{\min}^{+}(\cdot)$, and $\lambda_{\max}(\cdot)$ stand for the smallest eigenvalue, the smallest positive eigenvalue, and the largest eigenvalue of a matrix, respectively.

\item[--] $\lambda(\cdot)$ denotes the spectrum of a matrix.

\item[--] $\rho(\cdot)$ denotes the spectral radius of a matrix.

\item[--] $\|\cdot\|_{2}$ denotes the spectral norm of a matrix.

\item[--] $\|\cdot\|_{A}$ denotes the energy norm induced by an SPD matrix $A\in\mathbb{R}^{n\times n}$: for any $\mathbf{v}\in\mathbb{R}^{n}$, $\|\mathbf{v}\|_{A}=\sqrt{\mathbf{v}^{T}A\mathbf{v}}$; for any $B\in\mathbb{R}^{n\times n}$, $\|B\|_{A}=\max\limits_{\mathbf{v}\in\mathbb{R}^{n}\backslash\{0\}}\frac{\|B\mathbf{v}\|_{A}}{\|\mathbf{v}\|_{A}}$.

\item[--] $\tr(\cdot)$ denotes the trace of a matrix.

\item[--] $\mathbb{E}[\cdot]$ denotes the expectation of a random variable.

\end{itemize}

\subsection{Two-level methods}

Several fundamental assumptions involved in the analysis of two-level methods are summarized as follows.

\begin{itemize}

\item Let $S\in\mathbb{R}^{n\times n_{\rm s}}$ and $P\in\mathbb{R}^{n\times n_{\rm c}}$ be of full column rank, and let $(S \ P)$ be of full row rank, where
    \begin{displaymath}
    \max\{n_{\rm s},n_{\rm c}\}<n\leq n_{\rm s}+n_{\rm c}.
    \end{displaymath}

\item Let $M_{\rm s}\in\mathbb{R}^{n_{\rm s}\times n_{\rm s}}$ be a nonsingular matrix such that $M_{\rm s}+M_{\rm s}^{T}-A_{\rm s}$ is SPD, or, equivalently, $\|I-M_{\rm s}^{-1}A_{\rm s}\|_{A_{\rm s}}<1$, where $A_{\rm s}=S^{T}AS$.

\item For each $k=1,2,\ldots,\nu$, let $\mathscr{B}^{(k)}_{\rm c}\llbracket\cdot\rrbracket:\mathbb{R}^{n_{\rm c}}\rightarrow\mathbb{R}^{n_{\rm c}}$ be a general mapping that is expected to be a good approximation to $A_{\rm c}^{-1}$, where $A_{\rm c}=P^{T}AP$.

\end{itemize}

With the above assumptions, an inexact two-level method for solving~\eqref{system} can be described by Algorithm~\ref{alg:iTL}. The coarse-level correction step is an inner iterative procedure, which will be carried out $\nu$ iterations.

\begin{algorithm}[!htbp]

\caption{\ \textbf{Inexact two-level method}}\label{alg:iTL}

\smallskip

\begin{algorithmic}[1]

\State Presmoothing: $\mathbf{u}^{(1)}\gets\mathbf{u}^{(0)}+SM_{\rm s}^{-1}S^{T}\big(\mathbf{f}-A\mathbf{u}^{(0)}\big)$

\smallskip

\State Restriction: $\mathbf{r}_{\rm c}\gets P^{T}\big(\mathbf{f}-A\mathbf{u}^{(1)}\big)$

\smallskip

\State Coarse-level correction: $\mathbf{e}_{\rm c}^{(k)}\gets\mathbf{e}_{\rm c}^{(k-1)}+\mathscr{B}_{\rm c}^{(k)}\big\llbracket\mathbf{r}_{\rm c}-A_{\rm c}\mathbf{e}_{\rm c}^{(k-1)}\big\rrbracket$ with $\mathbf{e}_{\rm c}^{(0)}=\mathbf{0}$

\smallskip

\State Prolongation: $\mathbf{u}^{(2)}\gets\mathbf{u}^{(1)}+P\mathbf{e}_{\rm c}^{(\nu)}$

\smallskip

\State Postsmoothing: $\mathbf{u}_{\rm ITL}\gets\mathbf{u}^{(2)}+SM_{\rm s}^{-T}S^{T}\big(\mathbf{f}-A\mathbf{u}^{(2)}\big)$

\end{algorithmic}

\end{algorithm}

\begin{remark}
In the third step of Algorithm~\ref{alg:iTL}, the solvers $\big\{\mathscr{B}^{(k)}_{\rm c}\llbracket\cdot\rrbracket\big\}_{k=1}^{\nu}$ can be chosen flexibly and independently. This enables us to design some hybrid algorithms, which are expected to combine the advantages of different methods.
\end{remark}

The third step of Algorithm~\ref{alg:iTL} aims to find an approximate solution to the coarse-level system $A_{\rm c}\mathbf{e}_{\rm c}=\mathbf{r}_{\rm c}$. In particular, if $\mathbf{e}_{\rm c}^{(\nu)}=A_{\rm c}^{-1}\mathbf{r}_{\rm c}$, then Algorithm~\ref{alg:iTL} reduces to Algorithm~\ref{alg:TL}. In this case, for notational consistency, the output of Algorithm~\ref{alg:iTL} is denoted by $\mathbf{u}_{\rm TL}$, and
\begin{displaymath}
\mathbf{u}-\mathbf{u}_{\rm TL}=E_{\rm TL}\big(\mathbf{u}-\mathbf{u}^{(0)}\big),
\end{displaymath}
where $E_{\rm TL}$, called the \textit{iteration matrix} (or \textit{error propagation matrix}) of Algorithm~\ref{alg:TL}, is given by
\begin{equation}\label{E-TL1}
E_{\rm TL}=(I-SM_{\rm s}^{-T}S^{T}A)(I-\varPi_{A})(I-SM_{\rm s}^{-1}S^{T}A)
\end{equation}
with
\begin{equation}\label{piA}
\varPi_{A}:=PA_{\rm c}^{-1}P^{T}A.
\end{equation}
The iteration matrix $E_{\rm TL}$ can be expressed as
\begin{equation}\label{E-TL2}
E_{\rm TL}=I-B_{\rm TL}^{-1}A,
\end{equation}
where
\begin{displaymath}
B_{\rm TL}^{-1}=S\overline{M}_{\rm s}^{-1}S^{T}+(I-SM_{\rm s}^{-T}S^{T}A)PA_{\rm c}^{-1}P^{T}(I-ASM_{\rm s}^{-1}S^{T})
\end{displaymath}
with
\begin{equation}\label{barMs}
\overline{M}_{\rm s}:=M_{\rm s}(M_{\rm s}+M_{\rm s}^{T}-A_{\rm s})^{-1}M_{\rm s}^{T}.
\end{equation}

\begin{remark}
It is easy to check that $B_{\rm TL}^{-1}$ admits the hierarchical expression
\begin{displaymath}
B_{\rm TL}^{-1}=\big(S \ \ P\big)\widehat{B}_{\rm TL}^{-1}\big(S \ \ P\big)^{T}
\end{displaymath}
with
\begin{displaymath}
\widehat{B}_{\rm TL}=\begin{pmatrix}
I_{n_{\rm s}} & 0 \\
P^{T}ASM_{\rm s}^{-1} & I_{n_{\rm c}}
\end{pmatrix}\begin{pmatrix}
\overline{M}_{\rm s} & 0 \\
0 & A_{\rm c}
\end{pmatrix}\begin{pmatrix}
I_{n_{\rm s}} & M_{\rm s}^{-T}S^{T}AP \\
0 & I_{n_{\rm c}}
\end{pmatrix}.
\end{displaymath}
Since $\widehat{B}_{\rm TL}$ is SPD and $\rank(S \,\ P)=n$, it follows that $B_{\rm TL}$ is SPD, which validates the notation $B_{\rm TL}^{-1}$ appearing in~\eqref{E-TL2}. In addition, we get from~\eqref{E-TL1} that $A^{\frac{1}{2}}E_{\rm TL}A^{-\frac{1}{2}}$ is symmetric positive semidefinite (SPSD), which, combined with~\eqref{E-TL2}, yields the positive semidefiniteness of $B_{\rm TL}-A$.
\end{remark}

According to the positive semidefiniteness of $A^{\frac{1}{2}}E_{\rm TL}A^{-\frac{1}{2}}$ and~\eqref{E-TL2}, we deduce that
\begin{equation}\label{norm-ETL}
\|E_{\rm TL}\|_{A}=\rho(E_{\rm TL})=\lambda_{\max}(E_{\rm TL})=1-\lambda_{\min}\big(B_{\rm TL}^{-1}A\big),
\end{equation}
which is referred to as the \textit{convergence factor} of Algorithm~\ref{alg:TL}.

The following theorem gives an identity for characterizing the convergence factor $\|E_{\rm TL}\|_{A}$~\cite[Theorem~4.1]{Falgout2005}, an abstract version of which can be found in~\cite{XZ2002,XZ2017}.

\begin{theorem}
Let $\varPi_{A}$ be defined by~\eqref{piA}, and define
\begin{equation}\label{tildMs}
\widetilde{M}_{\rm s}:=M_{\rm s}^{T}(M_{\rm s}+M_{\rm s}^{T}-A_{\rm s})^{-1}M_{\rm s}.
\end{equation}
Then, the convergence factor of Algorithm~{\rm\ref{alg:TL}} can be characterized as
\begin{equation}\label{XZ-1}
\|E_{\rm TL}\|_{A}=1-\frac{1}{K_{\rm TL}},
\end{equation}
where
\begin{equation}\label{K-TL}
K_{\rm TL}=\sup_{\mathbf{v}\in\Range(I-\varPi_{A})}\inf_{\mathbf{v}_{\rm s}:\mathbf{v}=(I-\varPi_{A})S\mathbf{v}_{\rm s}}\frac{\mathbf{v}_{\rm s}^{T}\widetilde{M}_{\rm s}\mathbf{v}_{\rm s}}{\mathbf{v}^{T}A\mathbf{v}}.
\end{equation}
\end{theorem}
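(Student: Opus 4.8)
The plan is to start from~\eqref{E-TL2} and~\eqref{norm-ETL}: since $E_{\rm TL}=I-B_{\rm TL}^{-1}A$ and $\|E_{\rm TL}\|_{A}=1-\lambda_{\min}\big(B_{\rm TL}^{-1}A\big)$, the asserted identity~\eqref{XZ-1} is equivalent to $\lambda_{\min}\big(B_{\rm TL}^{-1}A\big)=1/K_{\rm TL}$. Because $B_{\rm TL}$ is SPD (as noted above), $B_{\rm TL}^{-1}A$ is self-adjoint and positive definite in the $A$-inner product, so by the Courant--Fischer principle $\lambda_{\min}\big(B_{\rm TL}^{-1}A\big)=\min_{\mathbf{v}\neq\mathbf{0}}\frac{\mathbf{v}^{T}A\mathbf{v}}{\mathbf{v}^{T}B_{\rm TL}\mathbf{v}}$, and it suffices to prove that
\[
K_{\rm TL}=\max_{\mathbf{v}\neq\mathbf{0}}\frac{\mathbf{v}^{T}B_{\rm TL}\mathbf{v}}{\mathbf{v}^{T}A\mathbf{v}}.
\]

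Next I would convert the right-hand side into an infimum over local representations. Writing the block factorization displayed above as $\widehat{B}_{\rm TL}=L\widehat{D}L^{T}$ with $L$ unit lower triangular and $\widehat{D}=\diag\!\big(\overline{M}_{\rm s},A_{\rm c}\big)$, one has $B_{\rm TL}^{-1}=(S\ \ P)\widehat{B}_{\rm TL}^{-1}(S\ \ P)^{T}=Y\widehat{D}^{-1}Y^{T}$, where $Y:=(S\ \ P)L^{-T}=\big(\,S\ \ (I-SM_{\rm s}^{-T}S^{T}A)P\,\big)$ has full row rank. Combining this with the elementary fact that, for SPD $\mathcal{Z}$ and $Y$ of full row rank, $X^{-1}=Y\mathcal{Z}^{-1}Y^{T}$ implies $\mathbf{w}^{T}X\mathbf{w}=\min\{\mathbf{z}^{T}\mathcal{Z}\mathbf{z}:Y\mathbf{z}=\mathbf{w}\}$ (solve the constrained quadratic minimization by a Lagrange multiplier), one obtains
\[
\mathbf{v}^{T}B_{\rm TL}\mathbf{v}=\min_{\mathbf{v}=S\mathbf{v}_{\rm s}+(I-SM_{\rm s}^{-T}S^{T}A)P\mathbf{v}_{\rm c}}\big(\mathbf{v}_{\rm s}^{T}\overline{M}_{\rm s}\mathbf{v}_{\rm s}+\mathbf{v}_{\rm c}^{T}A_{\rm c}\mathbf{v}_{\rm c}\big).
\]

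It then remains to reshape this into the expression defining $K_{\rm TL}$. I would first eliminate the coarse variable: the substitution $\widetilde{\mathbf{v}}_{\rm s}=\mathbf{v}_{\rm s}-M_{\rm s}^{-T}S^{T}AP\mathbf{v}_{\rm c}$ turns the constraint into $\mathbf{v}=S\widetilde{\mathbf{v}}_{\rm s}+P\mathbf{v}_{\rm c}$, and applying $I-\varPi_{A}$ (which annihilates $\Range(P)$, since $\varPi_{A}$ is the $A$-orthogonal projection onto $\Range(P)$) gives $(I-\varPi_{A})\mathbf{v}=(I-\varPi_{A})S\widetilde{\mathbf{v}}_{\rm s}$. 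One then shows that minimizing over $\mathbf{v}_{\rm c}$ --- using $\mathbf{v}_{\rm c}^{T}A_{\rm c}\mathbf{v}_{\rm c}=\|P\mathbf{v}_{\rm c}\|_{A}^{2}$ together with the cross term in $\mathbf{v}_{\rm s}^{T}\overline{M}_{\rm s}\mathbf{v}_{\rm s}$ --- removes the $\Range(P)$-component of $\mathbf{v}$ from both numerator and denominator, so that the supremum over $\mathbb{R}^{n}$ collapses to one over $\Range(I-\varPi_{A})$ with the projected constraint $\mathbf{v}=(I-\varPi_{A})S\mathbf{v}_{\rm s}$. The last ingredient is the relation between the two symmetrized smoothers, $\overline{M}_{\rm s}^{-1}=M_{\rm s}^{-T}(M_{\rm s}+M_{\rm s}^{T}-A_{\rm s})M_{\rm s}^{-1}$ and $\widetilde{M}_{\rm s}^{-1}=M_{\rm s}^{-1}(M_{\rm s}+M_{\rm s}^{T}-A_{\rm s})M_{\rm s}^{-T}$ --- equivalently $S\overline{M}_{\rm s}^{-1}S^{T}=R+R^{T}-R^{T}AR$ and $S\widetilde{M}_{\rm s}^{-1}S^{T}=R+R^{T}-RAR^{T}$ with $R:=SM_{\rm s}^{-1}S^{T}$ --- which, fed back through the duality fact of the previous paragraph, converts the residual ``$\min$ against $\overline{M}_{\rm s}$'' into ``$\inf$ of $\mathbf{v}_{\rm s}^{T}\widetilde{M}_{\rm s}\mathbf{v}_{\rm s}$'' and identifies the maximum with $K_{\rm TL}$.

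I expect this last reshaping to be the main obstacle. One has to carry out the elimination of $\mathbf{v}_{\rm c}$ carefully, since $\mathbb{R}^{n}=\Range(S)+\Range(P)$ need not be a direct sum and hence $\mathbf{v}_{\rm c}$ is not uniquely determined by $\mathbf{v}$ and $\mathbf{v}_{\rm s}$; and, more delicately, one must account for the replacement of $\overline{M}_{\rm s}$ in $B_{\rm TL}$ by $\widetilde{M}_{\rm s}$ in $K_{\rm TL}$ --- these two matrices coincide only when $M_{\rm s}$ is symmetric, and the swap is tied to the presmoothing-then-postsmoothing order in Algorithm~\ref{alg:TL} together with the direction in which the $\mathbf{v}_{\rm s}$-minimization is dualized. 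An alternative route that sidesteps the factorization is the following: from $E_{\rm TL}=(I-SM_{\rm s}^{-T}S^{T}A)(I-\varPi_{A})(I-SM_{\rm s}^{-1}S^{T}A)$ and the fact that $I-\varPi_{A}$ is an $A$-orthogonal projection, one gets $\langle(I-E_{\rm TL})\mathbf{v},\mathbf{v}\rangle_{A}=\|\mathbf{v}\|_{A}^{2}-\|(I-\varPi_{A})(I-SM_{\rm s}^{-1}S^{T}A)\mathbf{v}\|_{A}^{2}$, hence $K_{\rm TL}=\max_{\mathbf{v}\neq\mathbf{0}}\|\mathbf{v}\|_{A}^{2}\big/\big(\|\mathbf{v}\|_{A}^{2}-\|(I-\varPi_{A})(I-SM_{\rm s}^{-1}S^{T}A)\mathbf{v}\|_{A}^{2}\big)$, and matching this with the claimed double extremum runs into the same algebraic core.
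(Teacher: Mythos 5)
You should first be aware that the paper does not prove this theorem: it quotes it from \cite[Theorem~4.1]{Falgout2005}, so your proposal can only be measured against that standard argument. Your opening moves are correct and are indeed how one starts: reducing~\eqref{XZ-1} to $\lambda_{\min}\big(B_{\rm TL}^{-1}A\big)=1/K_{\rm TL}$ via~\eqref{norm-ETL}, reading off $Y=\big(S\ \ (I-SM_{\rm s}^{-T}S^{T}A)P\big)$ from the block factorization, and invoking the duality $\mathbf{v}^{T}B_{\rm TL}\mathbf{v}=\min\big\{\mathbf{v}_{\rm s}^{T}\overline{M}_{\rm s}\mathbf{v}_{\rm s}+\mathbf{v}_{\rm c}^{T}A_{\rm c}\mathbf{v}_{\rm c}\big\}$ over the stated constraint are all sound.

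The gap lies in the step you yourself flag as the obstacle, and the mechanism you propose for it is not merely incomplete but false. You claim that minimizing over $\mathbf{v}_{\rm c}$ ``removes the $\Range(P)$-component of $\mathbf{v}$ from both numerator and denominator, so that the supremum over $\mathbb{R}^{n}$ collapses to one over $\Range(I-\varPi_{A})$.'' Take $n=2$, $A=I_{2}$, $S=(1,1)^{T}$, $P=(0,1)^{T}$, $M_{\rm s}=A_{\rm s}=2$. Then $B_{\rm TL}^{-1}=\bigl(\begin{smallmatrix}3/4&1/4\\1/4&3/4\end{smallmatrix}\bigr)$ and $K_{\rm TL}=2$, but $\max_{\mathbf{v}}\mathbf{v}^{T}B_{\rm TL}\mathbf{v}/\|\mathbf{v}\|_{A}^{2}=2$ is attained only at $\mathbf{v}\propto(1,-1)^{T}$, which has a nonzero component in $\Range(P)$; restricting the Rayleigh quotient of $B_{\rm TL}$ to $\Range(I-\varPi_{A})=\Span\{(1,0)^{T}\}$ yields only $3/2$. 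So the supremum does \emph{not} collapse onto $\Range(I-\varPi_{A})$, and the two quadratic forms $\mathbf{v}^{T}B_{\rm TL}\mathbf{v}$ and $\inf\{\mathbf{v}_{\rm s}^{T}\widetilde{M}_{\rm s}\mathbf{v}_{\rm s}\}$ are not pointwise equal on that subspace ($3/2$ versus $2$): only the two extrema, taken over \emph{different} sets and of \emph{different} quadratic forms, coincide. That coincidence is the whole content of the XZ identity and cannot be reached by completing the square in $\mathbf{v}_{\rm c}$; one needs the actual Xu--Zikatanov two-sided argument, in which the lower bound is obtained by exhibiting a particular admissible $\mathbf{v}_{\rm s}$ built from the error-propagation operator --- and that is also precisely where the order reversal producing $\widetilde{M}_{\rm s}$ in place of $\overline{M}_{\rm s}$ enters, which your sketch attributes, incorrectly, to the algebraic identities relating $R=SM_{\rm s}^{-1}S^{T}$ to the two symmetrizations. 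Your alternative route at the end correctly lands on $K_{\rm TL}=\max_{\mathbf{v}\neq\mathbf{0}}\|\mathbf{v}\|_{A}^{2}\big/\big(\|\mathbf{v}\|_{A}^{2}-\|(I-\varPi_{A})(I-SM_{\rm s}^{-1}S^{T}A)\mathbf{v}\|_{A}^{2}\big)$, but, as you concede, it stops at the same unresolved core.
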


\begin{remark}
Since
\begin{displaymath}
\lambda_{\min}(E_{\rm TL})=\lambda_{\min}\big(A^{\frac{1}{2}}E_{\rm TL}A^{-\frac{1}{2}}\big)=0,
\end{displaymath}
we obtain
\begin{displaymath}
\lambda_{\max}\big(B_{\rm TL}^{-1}A\big)=1-\lambda_{\min}(E_{\rm TL})=1.
\end{displaymath}
From~\eqref{norm-ETL} and~\eqref{XZ-1}, we have
\begin{displaymath}
\lambda_{\min}\big(B_{\rm TL}^{-1}A\big)=\frac{1}{K_{\rm TL}}.
\end{displaymath}
Hence,
\begin{displaymath}
K_{\rm TL}=\frac{\lambda_{\max}\big(B_{\rm TL}^{-1}A\big)}{\lambda_{\min}\big(B_{\rm TL}^{-1}A\big)},
\end{displaymath}
i.e., $K_{\rm TL}$ is the corresponding condition number when Algorithm~\ref{alg:TL} is treated as a preconditioning method.
\end{remark}

\subsection{Two-grid methods}

In this subsection, we consider an extreme case of Algorithm~\ref{alg:iTL}. Let $M\in\mathbb{R}^{n\times n}$ be a nonsingular matrix such that $M+M^{T}-A$ is SPD. If $n_{\rm s}=n$, $S=I_{n}$, and $M_{\rm s}=M$, then Algorithm~\ref{alg:iTL} reduces to the following \textit{inexact two-grid method}.

\begin{algorithm}[!htbp]

\caption{\ \textbf{Inexact two-grid method}}\label{alg:iTG}

\smallskip

\begin{algorithmic}[1]

\State Presmoothing: $\mathbf{u}^{(1)}\gets\mathbf{u}^{(0)}+M^{-1}\big(\mathbf{f}-A\mathbf{u}^{(0)}\big)$

\smallskip

\State Restriction: $\mathbf{r}_{\rm c}\gets P^{T}\big(\mathbf{f}-A\mathbf{u}^{(1)}\big)$

\smallskip

\State Coarse-grid correction: $\mathbf{e}_{\rm c}^{(k)}\gets\mathbf{e}_{\rm c}^{(k-1)}+\mathscr{B}_{\rm c}^{(k)}\big\llbracket\mathbf{r}_{\rm c}-A_{\rm c}\mathbf{e}_{\rm c}^{(k-1)}\big\rrbracket$ with $\mathbf{e}_{\rm c}^{(0)}=\mathbf{0}$

\smallskip

\State Prolongation: $\mathbf{u}^{(2)}\gets\mathbf{u}^{(1)}+P\mathbf{e}_{\rm c}^{(\nu)}$

\smallskip

\State Postsmoothing: $\mathbf{u}_{\rm ITG}\gets\mathbf{u}^{(2)}+M^{-T}\big(\mathbf{f}-A\mathbf{u}^{(2)}\big)$

\end{algorithmic}

\end{algorithm}

If $\mathbf{e}_{\rm c}^{(\nu)}=A_{\rm c}^{-1}\mathbf{r}_{\rm c}$, then Algorithm~\ref{alg:iTG} is called an \textit{exact two-grid method}, in which case the iteration matrix takes the form
\begin{displaymath}
E_{\rm TG}=(I-M^{-T}A)(I-\varPi_{A})(I-M^{-1}A).
\end{displaymath}
Similarly, $E_{\rm TG}$ can be expressed as
\begin{displaymath}
E_{\rm TG}=I-B_{\rm TG}^{-1}A,
\end{displaymath}
where
\begin{displaymath}
B_{\rm TG}^{-1}=\overline{M}^{-1}+(I-M^{-T}A)PA_{\rm c}^{-1}P^{T}(I-AM^{-1})
\end{displaymath}
with
\begin{displaymath}
\overline{M}:=M(M+M^{T}-A)^{-1}M^{T}.
\end{displaymath}

Under the setting of two-grid methods, one can obtain a simplified version of the identity~\eqref{XZ-1}~\cite[Theorem~4.3]{Falgout2005}, as described in the following theorem.

\begin{theorem}
Define
\begin{displaymath}
\widetilde{M}:=M^{T}(M+M^{T}-A)^{-1}M \quad \text{and} \quad \varPi_{\widetilde{M}}:=P(P^{T}\widetilde{M}P)^{-1}P^{T}\widetilde{M}.
\end{displaymath}
Then, the convergence factor of the exact two-grid algorithm can be characterized as
\begin{equation}\label{XZ-2}
\|E_{\rm TG}\|_{A}=1-\frac{1}{K_{\rm TG}},
\end{equation}
where
\begin{equation}\label{K-TG}
K_{\rm TG}=\max_{\mathbf{v}\in\mathbb{R}^{n}\backslash\{0\}}\frac{\big\|\big(I-\varPi_{\widetilde{M}}\big)\mathbf{v}\big\|_{\widetilde{M}}^{2}}{\|\mathbf{v}\|_{A}^{2}}.
\end{equation}
\end{theorem}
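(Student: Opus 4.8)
The plan is to specialize the general identity~\eqref{XZ-1}--\eqref{K-TL} to the two-grid setting $n_{\rm s}=n$, $S=I_{n}$, $M_{\rm s}=M$ and then rewrite the resulting constrained supremum as the unconstrained Rayleigh-quotient-type expression in~\eqref{K-TG}. With $S=I_{n}$ the smoother matrix $\widetilde{M}_{\rm s}$ in~\eqref{tildMs} becomes exactly $\widetilde{M}=M^{T}(M+M^{T}-A)^{-1}M$, which is SPD since $M+M^{T}-A$ is SPD and $M$ is nonsingular; I would first record this and note that $\varPi_{A}=PA_{\rm c}^{-1}P^{T}A$ is the $A$-orthogonal projection onto $\Range(P)$, so $I-\varPi_{A}$ is the $A$-orthogonal projection onto the $A$-orthogonal complement of $\Range(P)$, and in particular $\Range(I-\varPi_{A})=\Null(P^{T}A)$.

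Next I would simplify the inner infimum in~\eqref{K-TL}. With $S=I_{n}$, the constraint $\mathbf{v}=(I-\varPi_{A})\mathbf{v}_{\rm s}$ ranges $\mathbf{v}_{\rm s}$ over all of $\mathbb{R}^{n}$, and for fixed $\mathbf{v}\in\Range(I-\varPi_{A})$ the admissible set is the affine subspace $\mathbf{v}+\Range(P)$ (since $\mathbf{v}_{\rm s}-\mathbf{v}\in\Null(I-\varPi_{A})=\Range(\varPi_{A})=\Range(P)$). Hence
\begin{displaymath}
K_{\rm TG}=\sup_{\mathbf{v}\in\Null(P^{T}A)\setminus\{0\}}\ \frac{1}{\mathbf{v}^{T}A\mathbf{v}}\ \min_{\mathbf{w}\in\Range(P)}\,(\mathbf{v}+\mathbf{w})^{T}\widetilde{M}(\mathbf{v}+\mathbf{w}).
\end{displaymath}
The inner minimization is a least-squares problem in the $\widetilde{M}$-inner product: its minimizer is $\mathbf{v}+\mathbf{w}^{\ast}=(I-\varPi_{\widetilde{M}})\mathbf{v}$, the $\widetilde{M}$-orthogonal projection of $\mathbf{v}$ onto the $\widetilde{M}$-orthogonal complement of $\Range(P)$, where $\varPi_{\widetilde{M}}=P(P^{T}\widetilde{M}P)^{-1}P^{T}\widetilde{M}$ as defined in the statement; this uses that $P^{T}\widetilde{M}P$ is invertible because $\widetilde{M}$ is SPD and $P$ has full column rank. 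Therefore the inner minimum equals $\|(I-\varPi_{\widetilde{M}})\mathbf{v}\|_{\widetilde{M}}^{2}$, giving
\begin{displaymath}
K_{\rm TG}=\sup_{\mathbf{v}\in\Null(P^{T}A)\setminus\{0\}}\frac{\|(I-\varPi_{\widetilde{M}})\mathbf{v}\|_{\widetilde{M}}^{2}}{\|\mathbf{v}\|_{A}^{2}}.
\end{displaymath}

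The last step — and the one I expect to be the only real obstacle — is to remove the constraint $\mathbf{v}\in\Null(P^{T}A)$ and replace it by the free supremum over all $\mathbf{v}\in\mathbb{R}^{n}\setminus\{0\}$ appearing in~\eqref{K-TG}. The idea is that restricting to $\Null(P^{T}A)=\Range(I-\varPi_{A})$ changes neither numerator nor the relevant part of the denominator: for an arbitrary $\mathbf{v}\in\mathbb{R}^{n}$ write $\mathbf{v}=(I-\varPi_{A})\mathbf{v}+\varPi_{A}\mathbf{v}$ with $\varPi_{A}\mathbf{v}\in\Range(P)$. Since $\varPi_{\widetilde{M}}$ is the identity on $\Range(P)$ and $I-\varPi_{\widetilde{M}}$ annihilates $\Range(P)$, the numerator satisfies $(I-\varPi_{\widetilde{M}})\mathbf{v}=(I-\varPi_{\widetilde{M}})(I-\varPi_{A})\mathbf{v}$. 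For the denominator one uses $A$-orthogonality of the splitting: $\|\mathbf{v}\|_{A}^{2}=\|(I-\varPi_{A})\mathbf{v}\|_{A}^{2}+\|\varPi_{A}\mathbf{v}\|_{A}^{2}\ge\|(I-\varPi_{A})\mathbf{v}\|_{A}^{2}$, so passing from $\mathbf{v}$ to $(I-\varPi_{A})\mathbf{v}$ can only increase the quotient (and does not change it when $\mathbf{v}\in\Null(P^{T}A)$ already). Hence the free supremum over $\mathbb{R}^{n}\setminus\{0\}$ equals the constrained one, which completes the identification of $K_{\rm TG}$; combining with~\eqref{XZ-1} — or directly with~\eqref{norm-ETL} applied to $E_{\rm TG}$ — yields $\|E_{\rm TG}\|_{A}=1-1/K_{\rm TG}$. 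One should double-check the degenerate case where $(I-\varPi_{A})\mathbf{v}=\mathbf{0}$, i.e. $\mathbf{v}\in\Range(P)$: then the numerator vanishes too, so such $\mathbf{v}$ contribute $0$ and do not affect the supremum (assuming, as is implicit, that $K_{\rm TG}\ge 1$, which follows because $E_{\rm TG}$ is SPSD in the $A$-inner product, hence $\|E_{\rm TG}\|_{A}\ge 0$).
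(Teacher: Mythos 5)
Your argument is correct. Note, however, that the paper does not prove this theorem at all --- it is quoted directly from [Falgout--Vassilevski--Zikatanov, Theorem~4.3] --- so there is no in-paper proof to compare against; what you have written is essentially the standard derivation of the two-grid identity from the general two-level identity~\eqref{XZ-1}--\eqref{K-TL}. The two delicate points are handled properly: the admissible set of the inner infimum is correctly identified as the affine space $\mathbf{v}+\Range(P)$ (using $\Null(I-\varPi_{A})=\Range(P)$), so the infimum is the $\widetilde{M}$-best approximation error $\|(I-\varPi_{\widetilde{M}})\mathbf{v}\|_{\widetilde{M}}^{2}$; and the removal of the constraint $\mathbf{v}\in\Null(P^{T}A)$ via the $A$-orthogonal splitting $\mathbf{v}=(I-\varPi_{A})\mathbf{v}+\varPi_{A}\mathbf{v}$, together with $(I-\varPi_{\widetilde{M}})\varPi_{A}\mathbf{v}=\mathbf{0}$ and the monotonicity of the denominator, is exactly the right mechanism, with the degenerate case $\mathbf{v}\in\Range(P)$ correctly dismissed since $K_{\rm TG}\geq 1$.
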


\begin{remark}
Observe that $\varPi_{\widetilde{M}}$ is an $\widetilde{M}$-orthogonal projection onto the coarse space $\Range(P)$. For any $\mathbf{v}\in\mathbb{R}^{n}$, it holds that
\begin{displaymath}
\big\|\big(I-\varPi_{\widetilde{M}}\big)\mathbf{v}\big\|_{\widetilde{M}}=\min_{\mathbf{v}_{\rm c}\in\mathbb{R}^{n_{\rm c}}}\|\mathbf{v}-P\mathbf{v}_{\rm c}\|_{\widetilde{M}}.
\end{displaymath}
That is, measured by $\|\cdot\|_{\widetilde{M}}$, $\varPi_{\widetilde{M}}\mathbf{v}$ is the best choice for approximating $\mathbf{v}$ by a vector in $\Range(P)$. The identity~\eqref{XZ-2} is a powerful tool for analyzing two-grid methods (see, e.g.,~\cite{Falgout2005,XZ2017,Brannick2018,XXF2018}), which reflects the interplay between smoother and coarse space.
\end{remark}

\section{Convergence analysis} \label{sec:conv}

In this section, we present a convergence analysis of Algorithm~\ref{alg:iTL}: an upper bound for the error $\|\mathbf{u}-\mathbf{u}_{\rm ITL}\|_{A}$ is derived. In addition, we will analyze the convergence of a simplified two-level algorithm: Algorithm~\ref{alg:iTL} without postsmoothing.

\subsection{Convergence of Algorithm~\ref{alg:iTL}}

Recall that, in Algorithm~\ref{alg:iTL}, the coarse-level system to be solved reads
\begin{displaymath}
A_{\rm c}\mathbf{e}_{\rm c}=\mathbf{r}_{\rm c}.
\end{displaymath}
From the third step of Algorithm~\ref{alg:iTL}, we have
\begin{displaymath}
\mathbf{r}_{\rm c}-A_{\rm c}\mathbf{e}_{\rm c}^{(k)}=\mathbf{r}_{\rm c}-A_{\rm c}\mathbf{e}_{\rm c}^{(k-1)}-A_{\rm c}\mathscr{B}_{\rm c}^{(k)}\big\llbracket\mathbf{r}_{\rm c}-A_{\rm c}\mathbf{e}_{\rm c}^{(k-1)}\big\rrbracket.
\end{displaymath}
Let
\begin{equation}\label{phi}
\Phi_{\rm c}^{(k)}\llbracket\cdot\rrbracket=I_{n_{\rm c}}(\cdot)-A_{\rm c}\mathscr{B}_{\rm c}^{(k)}\llbracket\cdot\rrbracket \quad \forall\,k=1,2,\ldots,\nu.
\end{equation}
Then
\begin{displaymath}
\mathbf{r}_{\rm c}-A_{\rm c}\mathbf{e}_{\rm c}^{(k)}=\Phi_{\rm c}^{(k)}\big\llbracket\mathbf{r}_{\rm c}-A_{\rm c}\mathbf{e}_{\rm c}^{(k-1)}\big\rrbracket,
\end{displaymath}
which leads to
\begin{displaymath}
\mathbf{r}_{\rm c}-A_{\rm c}\mathbf{e}_{\rm c}^{(k)}=\big(\Phi_{\rm c}^{(k)}\circ\cdots\circ\Phi_{\rm c}^{(1)}\big)\big\llbracket\mathbf{r}_{\rm c}-A_{\rm c}\mathbf{e}_{\rm c}^{(0)}\big\rrbracket=\big(\Phi_{\rm c}^{(k)}\circ\cdots\circ\Phi_{\rm c}^{(1)}\big)\llbracket\mathbf{r}_{\rm c}\rrbracket.
\end{displaymath}
Here, the symbol $\circ$ denotes the composition of functions. More specifically,
\begin{displaymath}
(\varphi\circ \psi)(x)=\varphi\big(\psi(x)\big)
\end{displaymath}
for all $x$ in the domain of $\psi$. Define
\begin{equation}\label{rk}
\mathbf{r}_{k}:=\begin{cases}
\mathbf{r}_{\rm c} & \text{if $k=0$},\\[2pt]
\big(\Phi_{\rm c}^{(k)}\circ\cdots\circ\Phi_{\rm c}^{(1)}\big)\llbracket\mathbf{r}_{\rm c}\rrbracket & \text{if $k=1,2,\ldots,\nu$}.
\end{cases}
\end{equation}
Then, it holds that
\begin{equation}\label{diff}
A_{\rm c}^{-1}\mathbf{r}_{\rm c}-\mathbf{e}_{\rm c}^{(k)}=A_{\rm c}^{-1}\mathbf{r}_{k} \quad \forall\,k=0,1,\ldots,\nu.
\end{equation}

In light of~\eqref{diff}, we can prove the following accuracy estimate.

\begin{lemma}\label{lem:accu}
Let $\{\mathbf{r}_{k}\}_{k=0}^{\nu}$ be defined by~\eqref{rk}. If
\begin{equation}\label{rela-error}
\big\|A_{\rm c}^{-1}\mathbf{r}_{k-1}-\mathscr{B}_{\rm c}^{(k)}\llbracket\mathbf{r}_{k-1}\rrbracket\big\|_{A_{\rm c}}\leq\varepsilon_{k}\|\mathbf{r}_{k-1}\|_{A_{\rm c}^{-1}}
\end{equation}
for some $\varepsilon_{k}\in[0,1)$ and all $k=1,2,\ldots,\nu$, then
\begin{equation}\label{accu-est}
\big\|A_{\rm c}^{-1}\mathbf{r}_{\rm c}-\mathbf{e}_{\rm c}^{(\nu)}\big\|_{A_{\rm c}}\leq\varepsilon\|\mathbf{r}_{\rm c}\|_{A_{\rm c}^{-1}},
\end{equation}
where
\begin{equation}\label{eps}
\varepsilon=\prod_{k=1}^{\nu}\varepsilon_{k}.
\end{equation}
\end{lemma}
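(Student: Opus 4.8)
The plan is to telescope the one-step accuracy estimates~\eqref{rela-error} through the recursion~\eqref{rk}, exploiting the key identity~\eqref{diff} together with the fact that the norms $\|\cdot\|_{A_{\rm c}}$ and $\|\cdot\|_{A_{\rm c}^{-1}}$ are mutually dual via $A_{\rm c}$. First I would observe that, by~\eqref{diff} with $k=\nu$, the quantity to be bounded is $\|A_{\rm c}^{-1}\mathbf{r}_{\rm c}-\mathbf{e}_{\rm c}^{(\nu)}\|_{A_{\rm c}}=\|A_{\rm c}^{-1}\mathbf{r}_{\nu}\|_{A_{\rm c}}=\|\mathbf{r}_{\nu}\|_{A_{\rm c}^{-1}}$, since $\|A_{\rm c}^{-1}\mathbf{w}\|_{A_{\rm c}}^{2}=\mathbf{w}^{T}A_{\rm c}^{-1}\mathbf{w}=\|\mathbf{w}\|_{A_{\rm c}^{-1}}^{2}$. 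Hence it suffices to prove the residual contraction bound $\|\mathbf{r}_{k}\|_{A_{\rm c}^{-1}}\leq\big(\prod_{j=1}^{k}\varepsilon_{j}\big)\|\mathbf{r}_{\rm c}\|_{A_{\rm c}^{-1}}$ for every $k=0,1,\ldots,\nu$, which I would establish by induction on $k$.

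For the inductive step, note that by~\eqref{rk} and~\eqref{phi} we have $\mathbf{r}_{k}=\Phi_{\rm c}^{(k)}\llbracket\mathbf{r}_{k-1}\rrbracket=\mathbf{r}_{k-1}-A_{\rm c}\mathscr{B}_{\rm c}^{(k)}\llbracket\mathbf{r}_{k-1}\rrbracket$, so that $A_{\rm c}^{-1}\mathbf{r}_{k}=A_{\rm c}^{-1}\mathbf{r}_{k-1}-\mathscr{B}_{\rm c}^{(k)}\llbracket\mathbf{r}_{k-1}\rrbracket$. Taking the $A_{\rm c}$-norm of both sides and using the duality identity again gives
\begin{displaymath}
\|\mathbf{r}_{k}\|_{A_{\rm c}^{-1}}=\big\|A_{\rm c}^{-1}\mathbf{r}_{k}\big\|_{A_{\rm c}}=\big\|A_{\rm c}^{-1}\mathbf{r}_{k-1}-\mathscr{B}_{\rm c}^{(k)}\llbracket\mathbf{r}_{k-1}\rrbracket\big\|_{A_{\rm c}}\leq\varepsilon_{k}\|\mathbf{r}_{k-1}\|_{A_{\rm c}^{-1}},
\end{displaymath}
where the last inequality is precisely the hypothesis~\eqref{rela-error}. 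Combining this with the inductive hypothesis $\|\mathbf{r}_{k-1}\|_{A_{\rm c}^{-1}}\leq\big(\prod_{j=1}^{k-1}\varepsilon_{j}\big)\|\mathbf{r}_{\rm c}\|_{A_{\rm c}^{-1}}$ closes the induction; the base case $k=0$ is trivial since $\mathbf{r}_{0}=\mathbf{r}_{\rm c}$ and the empty product equals $1$. Setting $k=\nu$ and recalling the reduction from the first paragraph yields~\eqref{accu-est} with $\varepsilon=\prod_{k=1}^{\nu}\varepsilon_{k}$ as in~\eqref{eps}.

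This argument is essentially bookkeeping, so I do not anticipate a genuine obstacle; the only point requiring care is the consistent use of the $A_{\rm c}$/$A_{\rm c}^{-1}$ duality — that is, recognizing that the hypothesis~\eqref{rela-error}, phrased as an $A_{\rm c}$-norm bound on the error $A_{\rm c}^{-1}\mathbf{r}_{k-1}-\mathscr{B}_{\rm c}^{(k)}\llbracket\mathbf{r}_{k-1}\rrbracket$, is exactly the statement that the new residual $\mathbf{r}_{k}$ has $A_{\rm c}^{-1}$-norm contracted by the factor $\varepsilon_{k}$. One should also note that the condition $\varepsilon_{k}\in[0,1)$ is not actually needed for the estimate~\eqref{accu-est} itself — any nonnegative $\varepsilon_{k}$ would do — but it guarantees that each inner step is genuinely a contraction, which is what makes the cumulative factor $\varepsilon<1$ and hence makes the inexact coarse solve meaningful; I would remark on this briefly.
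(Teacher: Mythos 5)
Your proposal is correct and follows essentially the same route as the paper: both reduce the claim via the identity $\|A_{\rm c}^{-1}\mathbf{r}_{\rm c}-\mathbf{e}_{\rm c}^{(\nu)}\|_{A_{\rm c}}=\|\mathbf{r}_{\nu}\|_{A_{\rm c}^{-1}}$ and then establish the one-step contraction $\|\mathbf{r}_{k}\|_{A_{\rm c}^{-1}}\leq\varepsilon_{k}\|\mathbf{r}_{k-1}\|_{A_{\rm c}^{-1}}$ from~\eqref{rela-error} via the $A_{\rm c}$/$A_{\rm c}^{-1}$ duality, chaining the result over $k=1,\ldots,\nu$. Your framing as an explicit induction and your side remark that $\varepsilon_{k}\in[0,1)$ is not needed for the bound itself are minor presentational differences only.
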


\begin{proof}
By~\eqref{phi}, \eqref{rk}, and~\eqref{rela-error}, we have that, for any $k=2,\ldots,\nu$,
\begin{align*}
\|\mathbf{r}_{k}\|_{A_{\rm c}^{-1}}&=\big\|\big(\Phi_{\rm c}^{(k)}\circ\cdots\circ\Phi_{\rm c}^{(1)}\big)\llbracket\mathbf{r}_{\rm c}\rrbracket\big\|_{A_{\rm c}^{-1}}\\
&=\big\|\big(\Phi_{\rm c}^{(k-1)}\circ\cdots\circ\Phi_{\rm c}^{(1)}\big)\llbracket\mathbf{r}_{\rm c}\rrbracket-A_{\rm c}\mathscr{B}_{\rm c}^{(k)}\big\llbracket\big(\Phi_{\rm c}^{(k-1)}\circ\cdots\circ\Phi_{\rm c}^{(1)}\big)\llbracket\mathbf{r}_{\rm c}\rrbracket\big\rrbracket\big\|_{A_{\rm c}^{-1}}\\
&=\big\|\mathbf{r}_{k-1}-A_{\rm c}\mathscr{B}_{\rm c}^{(k)}\llbracket\mathbf{r}_{k-1}\rrbracket\big\|_{A_{\rm c}^{-1}}\\
&=\big\|A_{\rm c}^{-1}\mathbf{r}_{k-1}-\mathscr{B}_{\rm c}^{(k)}\llbracket\mathbf{r}_{k-1}\rrbracket\big\|_{A_{\rm c}}\\
&\leq\varepsilon_{k}\|\mathbf{r}_{k-1}\|_{A_{\rm c}^{-1}}.
\end{align*}
Note that the above inequality also holds for $k=1$, i.e.,
\begin{displaymath}
\|\mathbf{r}_{1}\|_{A_{\rm c}^{-1}}\leq\varepsilon_{1}\|\mathbf{r}_{0}\|_{A_{\rm c}^{-1}}=\varepsilon_{1}\|\mathbf{r}_{\rm c}\|_{A_{\rm c}^{-1}}.
\end{displaymath}
In fact,
\begin{displaymath}
\|\mathbf{r}_{1}\|_{A_{\rm c}^{-1}}=\big\|A_{\rm c}^{-1}\Phi_{\rm c}^{(1)}\llbracket\mathbf{r}_{\rm c}\rrbracket\big\|_{A_{\rm c}}=\big\|A_{\rm c}^{-1}\mathbf{r}_{\rm c}-\mathscr{B}_{\rm c}^{(1)}\llbracket\mathbf{r}_{\rm c}\rrbracket\big\|_{A_{\rm c}}\leq\varepsilon_{1}\|\mathbf{r}_{\rm c}\|_{A_{\rm c}^{-1}},
\end{displaymath}
where we have used the condition~\eqref{rela-error} for $k=1$. Thus,
\begin{equation}\label{rk-recur}
\|\mathbf{r}_{k}\|_{A_{\rm c}^{-1}}\leq\varepsilon_{k}\|\mathbf{r}_{k-1}\|_{A_{\rm c}^{-1}} \quad \forall\,k=1,2,\ldots,\nu.
\end{equation}
Using~\eqref{diff} and~\eqref{rk-recur}, we obtain
\begin{displaymath}
\big\|A_{\rm c}^{-1}\mathbf{r}_{\rm c}-\mathbf{e}_{\rm c}^{(\nu)}\big\|_{A_{\rm c}}=\|\mathbf{r}_{\nu}\|_{A_{\rm c}^{-1}}\leq\varepsilon_{\nu}\|\mathbf{r}_{\nu-1}\|_{A_{\rm c}^{-1}}\leq\cdots\leq\Bigg(\prod_{k=1}^{\nu}\varepsilon_{k}\Bigg)\|\mathbf{r}_{\rm c}\|_{A_{\rm c}^{-1}}.
\end{displaymath}
This completes the proof.
\end{proof}

\begin{remark}
The condition~\eqref{rela-error} can be expressed as
\begin{displaymath}
\frac{\big\|A_{\rm c}^{-1}\mathbf{r}_{k-1}-\mathscr{B}_{\rm c}^{(k)}\llbracket\mathbf{r}_{k-1}\rrbracket\big\|_{A_{\rm c}}}{\big\|A_{\rm c}^{-1}\mathbf{r}_{k-1}\big\|_{A_{\rm c}}}\leq\varepsilon_{k},
\end{displaymath}
which characterizes the relative accuracy of the solver $\mathscr{B}_{\rm c}^{(k)}\llbracket\cdot\rrbracket$.
\end{remark}

\begin{remark}
The relation~\eqref{accu-est} is equivalent to
\begin{displaymath}
\|\mathbf{r}_{\rm c}\|_{A_{\rm c}^{-1}}^{2}-2\mathbf{r}_{\rm c}^{T}\mathbf{e}_{\rm c}^{(\nu)}+\big\|\mathbf{e}_{\rm c}^{(\nu)}\big\|_{A_{\rm c}}^{2}\leq\varepsilon^{2}\|\mathbf{r}_{\rm c}\|_{A_{\rm c}^{-1}}^{2}.
\end{displaymath}
That is,
\begin{equation}\label{rTe-low}
\frac{1}{2}\big((1-\varepsilon^{2})\|\mathbf{r}_{\rm c}\|_{A_{\rm c}^{-1}}^{2}+\big\|\mathbf{e}_{\rm c}^{(\nu)}\big\|_{A_{\rm c}}^{2}\big)\leq\mathbf{r}_{\rm c}^{T}\mathbf{e}_{\rm c}^{(\nu)}.
\end{equation}
By the Cauchy--Schwarz inequality, we have
\begin{equation}\label{CS}
\mathbf{r}_{\rm c}^{T}\mathbf{e}_{\rm c}^{(\nu)}\leq\|\mathbf{r}_{\rm c}\|_{A_{\rm c}^{-1}}\big\|\mathbf{e}_{\rm c}^{(\nu)}\big\|_{A_{\rm c}}.
\end{equation}
According to~\eqref{rTe-low} and~\eqref{CS}, we deduce that
\begin{displaymath}
\big\|\mathbf{e}_{\rm c}^{(\nu)}\big\|_{A_{\rm c}}^{2}-2\|\mathbf{r}_{\rm c}\|_{A_{\rm c}^{-1}}\big\|\mathbf{e}_{\rm c}^{(\nu)}\big\|_{A_{\rm c}}+(1-\varepsilon^{2})\|\mathbf{r}_{\rm c}\|_{A_{\rm c}^{-1}}^{2}\leq 0,
\end{displaymath}
which yields
\begin{equation}\label{e-low-up}
(1-\varepsilon)\|\mathbf{r}_{\rm c}\|_{A_{\rm c}^{-1}}\leq\big\|\mathbf{e}_{\rm c}^{(\nu)}\big\|_{A_{\rm c}}\leq(1+\varepsilon)\|\mathbf{r}_{\rm c}\|_{A_{\rm c}^{-1}}.
\end{equation}
Combining~\eqref{rTe-low}, \eqref{CS}, and~\eqref{e-low-up}, we obtain
\begin{displaymath}
(1-\varepsilon)\|\mathbf{r}_{\rm c}\|_{A_{\rm c}^{-1}}^{2}\leq\mathbf{r}_{\rm c}^{T}\mathbf{e}_{\rm c}^{(\nu)}\leq(1+\varepsilon)\|\mathbf{r}_{\rm c}\|_{A_{\rm c}^{-1}}^{2}.
\end{displaymath}
Such a relation is much weaker than the spectral equivalence relation
\begin{displaymath}
(1-\varepsilon)\mathbf{v}_{\rm c}^{T}A_{\rm c}^{-1}\mathbf{v}_{\rm c}\leq\mathbf{v}_{\rm c}^{T}B_{\rm c}^{-1}\mathbf{v}_{\rm c}\leq(1+\varepsilon)\mathbf{v}_{\rm c}^{T}A_{\rm c}^{-1}\mathbf{v}_{\rm c} \quad \forall\,\mathbf{v}_{\rm c}\in\mathbb{R}^{n_{\rm c}},
\end{displaymath}
or, equivalently,
\begin{displaymath}
\lambda\big(B_{\rm c}^{-1}A_{\rm c}\big)\subset[1-\varepsilon,1+\varepsilon],
\end{displaymath}
where $B_{\rm c}\in\mathbb{R}^{n_{\rm c}\times n_{\rm c}}$ is SPD.
\end{remark}

The following lemma gives a technical eigenvalue identity used in the subsequent analysis.

\begin{lemma}\label{lem:XZ-c}
Let $\varPi_{A}$ and $\widetilde{M}_{\rm s}$ be defined by~\eqref{piA} and~\eqref{tildMs}, respectively. Then
\begin{equation}\label{XZ-c}
\lambda_{\max}\big((I-S\widetilde{M}_{\rm s}^{-1}S^{T}A)\varPi_{A}\big)=\begin{cases}
1-\mu_{\rm TL} & \text{if $\rank(S^{T}AP)=n_{\rm c}$},\\[2pt]
1 & \text{if $\rank(S^{T}AP)<n_{\rm c}$},
\end{cases}
\end{equation}
where
\begin{equation}\label{muTL}
\mu_{\rm TL}=\lambda_{\min}^{+}\big(S\widetilde{M}_{\rm s}^{-1}S^{T}A\varPi_{A}\big).
\end{equation}
\end{lemma}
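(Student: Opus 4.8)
The plan is to analyze the operator $(I-S\widetilde{M}_{\rm s}^{-1}S^{T}A)\varPi_{A}$ via a similarity transformation that makes it symmetric, and then to compute its largest eigenvalue by exploiting the projection structure. First I would conjugate by $A^{1/2}$ and set $X:=A^{1/2}S\widetilde{M}_{\rm s}^{-1}S^{T}A^{1/2}$ and $Y:=A^{1/2}\varPi_{A}A^{-1/2}=A^{1/2}P(P^{T}AP)^{-1}P^{T}A^{1/2}$. Note $Y$ is the orthogonal projection onto $\mathcal{P}:=\Range(A^{1/2}P)$, and $X$ is symmetric positive semidefinite with $\Range(X)=\Range(A^{1/2}S)=:\mathcal{S}$. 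The quantity in question equals $\lambda_{\max}\big((I-X)Y\big)$. Since $(I-X)Y$ and $Y(I-X)Y$ have the same nonzero spectrum (the former is $Y$-related to the latter, or one uses $\lambda(AB)\setminus\{0\}=\lambda(BA)\setminus\{0\}$ with $A=(I-X)Y$, $B=Y$ up to idempotency of $Y$), and $Y(I-X)Y$ is symmetric, I get
\begin{displaymath}
\lambda_{\max}\big((I-S\widetilde{M}_{\rm s}^{-1}S^{T}A)\varPi_{A}\big)=\lambda_{\max}\big(Y(I-X)Y\big)=1-\lambda_{\min}\big(Y(I-X)Y\big|_{\mathcal{P}}\big)\ \ \text{restricted suitably},
\end{displaymath}
the subtlety being whether $\lambda_{\min}$ is taken over all of $\mathbb{R}^n$ (giving $0$ from $\mathcal{P}^{\perp}$) or over $\mathcal{P}$.

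Next I would make the case distinction precise. On $\mathcal{P}^{\perp}$ the operator $(I-X)Y$ vanishes, so the relevant spectrum lives on $\mathcal{P}$; there $\lambda_{\max}\big((I-X)Y\big)=\lambda_{\max}\big(Y(I-X)Y|_{\mathcal{P}}\big)=1-\lambda_{\min}\big(YXY|_{\mathcal{P}}\big)$. Now $YXY|_{\mathcal{P}}$ is the compression of the SPSD operator $X$ to $\mathcal{P}$; it is positive definite on $\mathcal{P}$ if and only if $\mathcal{P}\cap\Null(X)=\{0\}$, i.e.\ $\Range(A^{1/2}P)\cap\Range(A^{1/2}S)^{\perp}=\{0\}$. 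Since $\Range(A^{1/2}S)^{\perp}=\Null(S^{T}A^{1/2})$ after the conjugation, this intersection condition translates (via $A^{1/2}P\mathbf{v}_{\rm c}\perp\Range(A^{1/2}S)\iff S^{T}AP\mathbf{v}_{\rm c}=0$) to $\Null(S^{T}AP)=\{0\}$, i.e.\ $\rank(S^{T}AP)=n_{\rm c}$. In that case $\lambda_{\min}\big(YXY|_{\mathcal{P}}\big)>0$ and I would identify it with $\mu_{\rm TL}=\lambda_{\min}^{+}\big(S\widetilde{M}_{\rm s}^{-1}S^{T}A\varPi_{A}\big)$ by observing that $S\widetilde{M}_{\rm s}^{-1}S^{T}A\varPi_{A}$ is $A$-similar to $XY$, whose nonzero eigenvalues coincide with those of $YXY|_{\mathcal{P}}$ (again $\lambda(XY)\setminus\{0\}=\lambda(YX)\setminus\{0\}=\lambda(YXY)\setminus\{0\}$ using $Y^2=Y$). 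Hence $\lambda_{\min}^{+}(XY)=\lambda_{\min}\big(YXY|_{\mathcal{P}}\big)=\mu_{\rm TL}$, giving the first branch. If instead $\rank(S^{T}AP)<n_{\rm c}$, there is a nonzero $\mathbf{v}_{\rm c}$ with $S^{T}AP\mathbf{v}_{\rm c}=0$; then $\mathbf{w}:=A^{1/2}P\mathbf{v}_{\rm c}\in\mathcal{P}$ satisfies $X\mathbf{w}=0$ and $Y\mathbf{w}=\mathbf{w}$, so $(I-X)Y\mathbf{w}=\mathbf{w}$, exhibiting eigenvalue $1$; combined with $\lambda_{\max}\le 1$ (since $0\preceq Y(I-X)Y\preceq Y\preceq I$ on $\mathcal{P}$) this gives the second branch.

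The main obstacle I anticipate is bookkeeping the three "$\lambda(AB)\setminus\{0\}=\lambda(BA)\setminus\{0\}$'' reductions cleanly while keeping track of which space the minimum is over — in particular, making airtight the step $\lambda_{\min}^{+}\big(S\widetilde{M}_{\rm s}^{-1}S^{T}A\varPi_{A}\big)=\lambda_{\min}\big(YXY|_{\mathcal{P}}\big)$, which requires knowing that the nonzero spectrum of the (non-symmetric) product $XY$ is exactly the spectrum of the symmetric compression $YXY|_{\mathcal{P}}$, with no spurious extra nonzero eigenvalues. A convenient way to avoid a long computation here is to write $X=Z Z^{T}$ with $Z:=A^{1/2}S\widetilde{M}_{\rm s}^{-1/2}$ (legitimate because $\widetilde{M}_{\rm s}$ is SPD, as $M_{\rm s}+M_{\rm s}^{T}-A_{\rm s}$ is SPD), so that $XY$, $YX$, $Z^{T}YZ$ and $YZZ^{T}Y$ all share nonzero spectra, and $Z^{T}YZ=\widetilde{M}_{\rm s}^{-1/2}S^{T}A^{1/2}Y A^{1/2}S\widetilde{M}_{\rm s}^{-1/2}$ is manifestly SPSD of the right size; its rank equals $\rank(A^{1/2}YZ)=\rank(YA^{1/2}S)$, and the nondegeneracy condition $\rank(S^{T}AP)=n_{\rm c}$ is precisely what forces this compression to have full "coarse'' rank $n_{\rm c}$ on the relevant subspace. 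With that factorization in hand the eigenvalue identity and the rank dichotomy fall out directly, and the formula $\mu_{\rm TL}=\lambda_{\min}^{+}(XY)$ reads off immediately since $XY$ is $A$-similar to $S\widetilde{M}_{\rm s}^{-1}S^{T}A\varPi_{A}$.
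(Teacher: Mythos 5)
Your argument is correct and takes essentially the same route as the paper: both pass to the symmetric setting via $A^{1/2}$, use $\lambda(BC)\setminus\{0\}=\lambda(CB)\setminus\{0\}$ to reduce to a symmetric compression of $S\widetilde{M}_{\rm s}^{-1}S^{T}A$ onto the coarse space, and split cases according to whether that compression is nonsingular, which is exactly $\rank(S^{T}AP)=n_{\rm c}$; your $YXY|_{\mathcal{P}}$ is unitarily equivalent to the paper's $A_{\rm c}^{-1/2}P^{T}AS\widetilde{M}_{\rm s}^{-1}S^{T}APA_{\rm c}^{-1/2}$. The one step you assert without justification is $X\preceq I$, i.e.\ the positive semidefiniteness of $A^{-1}-S\widetilde{M}_{\rm s}^{-1}S^{T}$ (needed so that the zero eigenvalues coming from $\mathcal{P}^{\perp}$ do not dominate); the paper obtains it from the smoother assumption via the factorization $I-A_{\rm s}^{1/2}\widetilde{M}_{\rm s}^{-1}A_{\rm s}^{1/2}=\big(I-A_{\rm s}^{1/2}M_{\rm s}^{-1}A_{\rm s}^{1/2}\big)\big(I-A_{\rm s}^{1/2}M_{\rm s}^{-T}A_{\rm s}^{1/2}\big)$, and you should include that short argument.
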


\begin{proof}
From~\eqref{tildMs}, we have
\begin{displaymath}
I-A_{\rm s}^{\frac{1}{2}}\widetilde{M}_{\rm s}^{-1}A_{\rm s}^{\frac{1}{2}}=\big(I-A_{\rm s}^{\frac{1}{2}}M_{\rm s}^{-1}A_{\rm s}^{\frac{1}{2}}\big)\big(I-A_{\rm s}^{\frac{1}{2}}M_{\rm s}^{-T}A_{\rm s}^{\frac{1}{2}}\big),
\end{displaymath}
which implies that $A_{\rm s}^{-1}-\widetilde{M}_{\rm s}^{-1}$ is SPSD. It follows that $I-A^{\frac{1}{2}}S\widetilde{M}_{\rm s}^{-1}S^{T}A^{\frac{1}{2}}$ is SPSD, which yields the positive semidefiniteness of $A^{-1}-S\widetilde{M}_{\rm s}^{-1}S^{T}$. Then
\begin{displaymath}
\lambda\big((I-S\widetilde{M}_{\rm s}^{-1}S^{T}A)\varPi_{A}\big)=\lambda\big((A^{-1}-S\widetilde{M}_{\rm s}^{-1}S^{T})^{\frac{1}{2}}A\varPi_{A}(A^{-1}-S\widetilde{M}_{\rm s}^{-1}S^{T})^{\frac{1}{2}}\big)\subset[0,+\infty),
\end{displaymath}
where we have used the fact that $A\varPi_{A}$ is SPSD. Hence,
\begin{align*}
\lambda_{\max}\big((I-S\widetilde{M}_{\rm s}^{-1}S^{T}A)\varPi_{A}\big)&=\lambda_{\max}\big((I-S\widetilde{M}_{\rm s}^{-1}S^{T}A)PA_{\rm c}^{-1}P^{T}A\big)\\
&=\lambda_{\max}\big(P^{T}A(I-S\widetilde{M}_{\rm s}^{-1}S^{T}A)PA_{\rm c}^{-1}\big)\\
&=\lambda_{\max}\big(I-P^{T}AS\widetilde{M}_{\rm s}^{-1}S^{T}APA_{\rm c}^{-1}\big)\\
&=1-\lambda_{\min}\big(P^{T}AS\widetilde{M}_{\rm s}^{-1}S^{T}APA_{\rm c}^{-1}\big).
\end{align*}
Observe that
\begin{displaymath}
\lambda\big(P^{T}AS\widetilde{M}_{\rm s}^{-1}S^{T}APA_{\rm c}^{-1}\big)=\lambda\big(A_{\rm c}^{-\frac{1}{2}}P^{T}AS\widetilde{M}_{\rm s}^{-1}S^{T}APA_{\rm c}^{-\frac{1}{2}}\big)\subset[0,+\infty).
\end{displaymath}
If $\rank(S^{T}AP)=n_{\rm c}$, then $A_{\rm c}^{-\frac{1}{2}}P^{T}AS\widetilde{M}_{\rm s}^{-1}S^{T}APA_{\rm c}^{-\frac{1}{2}}$ is SPD, which implies that
\begin{displaymath}
0<\lambda_{\min}\big(P^{T}AS\widetilde{M}_{\rm s}^{-1}S^{T}APA_{\rm c}^{-1}\big)=\lambda_{\min}^{+}\big(S\widetilde{M}_{\rm s}^{-1}S^{T}A\varPi_{A}\big).
\end{displaymath}
If $\rank(S^{T}AP)<n_{\rm c}$, then $A_{\rm c}^{-\frac{1}{2}}P^{T}AS\widetilde{M}_{\rm s}^{-1}S^{T}APA_{\rm c}^{-\frac{1}{2}}$ is singular, which leads to
\begin{displaymath}
\lambda_{\min}\big(P^{T}AS\widetilde{M}_{\rm s}^{-1}S^{T}APA_{\rm c}^{-1}\big)=0.
\end{displaymath}
Thus, the identity~\eqref{XZ-c} is proved.
\end{proof}

Based on Lemmas~\ref{lem:accu} and~\ref{lem:XZ-c}, we can derive the following convergence estimate.

\begin{theorem}\label{thm:iTL1}
Under the condition~\eqref{rela-error}, the approximate solution, $\mathbf{u}_{\rm ITL}$, generated by Algorithm~{\rm\ref{alg:iTL}} satisfies that
\begin{equation}\label{iTL-est1}
\|\mathbf{u}-\mathbf{u}_{\rm ITL}\|_{A}\leq\sigma_{\rm ITL}\big\|\mathbf{u}-\mathbf{u}^{(0)}\big\|_{A}
\end{equation}
with
\begin{displaymath}
\sigma_{\rm ITL}=\begin{cases}
1-\frac{1}{K_{\rm TL}}+\varepsilon(1-\mu_{\rm TL}) & \text{if $\rank(S^{T}AP)=n_{\rm c}$},\\[2pt]
1-\frac{1}{K_{\rm TL}}+\varepsilon & \text{if $\rank(S^{T}AP)<n_{\rm c}$},
\end{cases}
\end{displaymath}
where $K_{\rm TL}$, $\varepsilon$, and $\mu_{\rm TL}$ are given by~\eqref{K-TL}, \eqref{eps}, and~\eqref{muTL}, respectively.
\end{theorem}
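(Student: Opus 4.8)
The plan is to decompose the error $\mathbf{u}-\mathbf{u}_{\rm ITL}$ into the exact two-level error plus a perturbation term coming from the inexact coarse solve, and then bound each piece separately. First I would write out the error propagation of Algorithm~\ref{alg:iTL} explicitly. Since presmoothing and postsmoothing are exactly as in Algorithm~\ref{alg:TL}, and the only difference is that $A_{\rm c}^{-1}\mathbf{r}_{\rm c}$ is replaced by $\mathbf{e}_{\rm c}^{(\nu)}$, I would get
\begin{displaymath}
\mathbf{u}-\mathbf{u}_{\rm ITL}=E_{\rm TL}\big(\mathbf{u}-\mathbf{u}^{(0)}\big)+(I-SM_{\rm s}^{-T}S^{T}A)P\big(A_{\rm c}^{-1}\mathbf{r}_{\rm c}-\mathbf{e}_{\rm c}^{(\nu)}\big),
\end{displaymath}
where $\mathbf{r}_{\rm c}=P^{T}A(\mathbf{u}-\mathbf{u}^{(1)})$ and $\mathbf{u}-\mathbf{u}^{(1)}=(I-SM_{\rm s}^{-1}S^{T}A)(\mathbf{u}-\mathbf{u}^{(0)})$. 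Taking $\|\cdot\|_{A}$ and using the triangle inequality, the first term contributes $\|E_{\rm TL}\|_{A}\|\mathbf{u}-\mathbf{u}^{(0)}\|_{A}=(1-1/K_{\rm TL})\|\mathbf{u}-\mathbf{u}^{(0)}\|_{A}$ by~\eqref{norm-ETL} and~\eqref{XZ-1}, so the work is entirely in the perturbation term.

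Next I would estimate $\big\|(I-SM_{\rm s}^{-T}S^{T}A)P(A_{\rm c}^{-1}\mathbf{r}_{\rm c}-\mathbf{e}_{\rm c}^{(\nu)})\big\|_{A}$. The key idea is to insert $A_{\rm c}^{-1}$ in the right places to convert $A$-norms into $A_{\rm c}$- and $A_{\rm c}^{-1}$-norms so that Lemma~\ref{lem:accu} applies. Writing $\mathbf{w}:=A_{\rm c}^{-1}\mathbf{r}_{\rm c}-\mathbf{e}_{\rm c}^{(\nu)}$, I have $\|P\mathbf{w}\|_{A}^{2}=\mathbf{w}^{T}A_{\rm c}\mathbf{w}=\|\mathbf{w}\|_{A_{\rm c}}^{2}$, and more generally the postsmoothing factor acts on $P\mathbf{w}$. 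The natural route is to bound the operator
\begin{displaymath}
\big\|(I-SM_{\rm s}^{-T}S^{T}A)P\mathbf{w}\big\|_{A}\leq c\,\|\mathbf{w}\|_{A_{\rm c}},
\end{displaymath}
where the constant $c$ is precisely the square root of $\lambda_{\max}$ of the operator appearing in Lemma~\ref{lem:XZ-c}: indeed
\begin{displaymath}
\big\|(I-SM_{\rm s}^{-T}S^{T}A)P\mathbf{w}\big\|_{A}^{2}=\mathbf{w}^{T}P^{T}(I-ASM_{\rm s}^{-1}S^{T})A(I-SM_{\rm s}^{-T}S^{T}A)P\mathbf{w},
\end{displaymath}
and $(I-ASM_{\rm s}^{-1}S^{T})A(I-SM_{\rm s}^{-T}S^{T}A)=A(I-S\widetilde{M}_{\rm s}^{-1}S^{T}A)$ by the definition~\eqref{tildMs} of $\widetilde{M}_{\rm s}$ — this is the standard smoother identity. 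Hence the quantity equals $\mathbf{w}^{T}P^{T}A(I-S\widetilde{M}_{\rm s}^{-1}S^{T}A)P\mathbf{w}=\mathbf{w}^{T}A_{\rm c}^{1/2}\big[A_{\rm c}^{-1/2}P^{T}A(I-S\widetilde{M}_{\rm s}^{-1}S^{T}A)PA_{\rm c}^{-1/2}\big]A_{\rm c}^{1/2}\mathbf{w}$, and the bracketed matrix is similar to $(I-S\widetilde{M}_{\rm s}^{-1}S^{T}A)\varPi_{A}$ restricted to the appropriate range, whose largest eigenvalue is given exactly by Lemma~\ref{lem:XZ-c}. So $c^{2}=1-\mu_{\rm TL}$ when $\rank(S^{T}AP)=n_{\rm c}$ and $c^{2}=1$ otherwise, giving $c\le\sqrt{1-\mu_{\rm TL}}\le 1-\mu_{\rm TL}$... — actually more carefully $c=\sqrt{1-\mu_{\rm TL}}$, but since $1-\mu_{\rm TL}\in[0,1]$ we have $\sqrt{1-\mu_{\rm TL}}\le 1$, and to match the stated bound one should note $\sqrt{1-\mu_{\rm TL}}\le 1-\mu_{\rm TL}/2$; I expect the paper actually absorbs this differently, so I would keep $c=\sqrt{1-\mu_{\rm TL}}$ and only at the end replace it by $1-\mu_{\rm TL}$ using $\sqrt{t}\le t$ is false for $t<1$ — so instead I suspect the intended bound uses $\varepsilon\sqrt{1-\mu_{\rm TL}}\le\varepsilon$ trivially and the sharper $1-\mu_{\rm TL}$ factor comes from a different grouping; I would reexamine this point during write-up.

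Finally I would close the argument by combining the pieces: $\|\mathbf{w}\|_{A_{\rm c}}\le\varepsilon\|\mathbf{r}_{\rm c}\|_{A_{\rm c}^{-1}}$ by Lemma~\ref{lem:accu}, and $\|\mathbf{r}_{\rm c}\|_{A_{\rm c}^{-1}}^{2}=\mathbf{r}_{\rm c}^{T}A_{\rm c}^{-1}\mathbf{r}_{\rm c}=(\mathbf{u}-\mathbf{u}^{(1)})^{T}AP A_{\rm c}^{-1}P^{T}A(\mathbf{u}-\mathbf{u}^{(1)})=\|\varPi_{A}(\mathbf{u}-\mathbf{u}^{(1)})\|_{A}^{2}\le\|\mathbf{u}-\mathbf{u}^{(1)}\|_{A}^{2}\le\|\mathbf{u}-\mathbf{u}^{(0)}\|_{A}^{2}$, the last step because $I-SM_{\rm s}^{-1}S^{T}A$ is an $A$-contraction (from $\|I-M_{\rm s}^{-1}A_{\rm s}\|_{A_{\rm s}}<1$). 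Chaining these with the triangle-inequality split then yields $\|\mathbf{u}-\mathbf{u}_{\rm ITL}\|_{A}\le\big(1-1/K_{\rm TL}+\varepsilon c\big)\|\mathbf{u}-\mathbf{u}^{(0)}\|_{A}$ with $c$ as above, which is~\eqref{iTL-est1}. The main obstacle I anticipate is the bookkeeping around the factor $c$: getting the clean $1-\mu_{\rm TL}$ (rather than $\sqrt{1-\mu_{\rm TL}}$) in the estimate requires either a more careful vector-level argument that avoids passing through the spectral norm of the postsmoothing-composed-with-prolongation operator, or an additional elementary inequality; pinning down which grouping the authors use is the one place the routine calculation could go astray.
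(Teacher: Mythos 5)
Your decomposition $\mathbf{u}-\mathbf{u}_{\rm ITL}=E_{\rm TL}\big(\mathbf{u}-\mathbf{u}^{(0)}\big)+(I-SM_{\rm s}^{-T}S^{T}A)P\big(A_{\rm c}^{-1}\mathbf{r}_{\rm c}-\mathbf{e}_{\rm c}^{(\nu)}\big)$ is exactly the paper's starting point, and your identification of $c^{2}=\lambda_{\max}\big((I-S\widetilde{M}_{\rm s}^{-1}S^{T}A)\varPi_{A}\big)$ as the squared norm of the postsmoothed prolongation is also what the paper computes. The gap is the one you flagged yourself and did not resolve: with your final step $\|\mathbf{r}_{\rm c}\|_{A_{\rm c}^{-1}}=\|\varPi_{A}(\mathbf{u}-\mathbf{u}^{(1)})\|_{A}\leq\|\mathbf{u}-\mathbf{u}^{(1)}\|_{A}\leq\|\mathbf{u}-\mathbf{u}^{(0)}\|_{A}$ you only obtain the perturbation bound $\varepsilon\sqrt{1-\mu_{\rm TL}}$, which is strictly weaker than the claimed $\varepsilon(1-\mu_{\rm TL})$ because $1-\mu_{\rm TL}\in[0,1]$ puts $\sqrt{t}\geq t$ in the wrong direction. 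No elementary inequality repairs this, so as written your argument proves a weaker theorem in the case $\rank(S^{T}AP)=n_{\rm c}$ (the rank-deficient case is unaffected, since there both routes give $\varepsilon$).

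The missing observation is that the \emph{same} operator norm appears a second time when you bound $\|\mathbf{r}_{\rm c}\|_{A_{\rm c}^{-1}}$, provided you do not split the restriction into ``projection times contraction.'' Writing $\mathbf{r}_{\rm c}=P^{T}A(I-SM_{\rm s}^{-1}S^{T}A)(\mathbf{u}-\mathbf{u}^{(0)})$ and keeping the composite operator intact gives
\begin{displaymath}
\|\mathbf{r}_{\rm c}\|_{A_{\rm c}^{-1}}\leq\big\|A_{\rm c}^{-\frac{1}{2}}P^{T}A^{\frac{1}{2}}\big(I-A^{\frac{1}{2}}SM_{\rm s}^{-1}S^{T}A^{\frac{1}{2}}\big)\big\|_{2}\,\big\|\mathbf{u}-\mathbf{u}^{(0)}\big\|_{A},
\end{displaymath}
and the matrix on the right is precisely the transpose of $A^{\frac{1}{2}}(I-SM_{\rm s}^{-T}S^{T}A)PA_{\rm c}^{-\frac{1}{2}}$, the operator you already bounded. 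Hence both factors equal $\lambda_{\max}^{\frac{1}{2}}\big((I-S\widetilde{M}_{\rm s}^{-1}S^{T}A)\varPi_{A}\big)$, and their product is the full $\lambda_{\max}$, which Lemma~\ref{lem:XZ-c} evaluates as $1-\mu_{\rm TL}$ (or $1$ in the rank-deficient case). Splitting off $\|\varPi_{A}\|_{A}=1$ and $\|I-SM_{\rm s}^{-1}S^{T}A\|_{A}\leq 1$ separately, as you did, discards exactly one of these two square-root factors. With that single replacement your argument coincides with the paper's proof; everything else in your proposal is correct.
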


\begin{proof}
From the last two steps of Algorithm~\ref{alg:iTL}, we have
\begin{displaymath}
\mathbf{u}-\mathbf{u}_{\rm ITL}=\big(I-SM_{\rm s}^{-T}S^{T}A\big)\big(\mathbf{u}-\mathbf{u}^{(1)}-P\mathbf{e}_{\rm c}^{(\nu)}\big).
\end{displaymath}
In the case of exact coarse solver, one has
\begin{displaymath}
\mathbf{u}-\mathbf{u}_{\rm TL}=\big(I-SM_{\rm s}^{-T}S^{T}A\big)\big(\mathbf{u}-\mathbf{u}^{(1)}-PA_{\rm c}^{-1}\mathbf{r}_{\rm c}\big).
\end{displaymath}
Then
\begin{displaymath}
\mathbf{u}_{\rm TL}-\mathbf{u}_{\rm ITL}=\big(I-SM_{\rm s}^{-T}S^{T}A\big)P\big(A_{\rm c}^{-1}\mathbf{r}_{\rm c}-\mathbf{e}_{\rm c}^{(\nu)}\big).
\end{displaymath}
Using~\eqref{accu-est}, we obtain
\begin{align*}
\|\mathbf{u}_{\rm TL}-\mathbf{u}_{\rm ITL}\|_{A}&=\big\|A^{\frac{1}{2}}\big(I-SM_{\rm s}^{-T}S^{T}A\big)P\big(A_{\rm c}^{-1}\mathbf{r}_{\rm c}-\mathbf{e}_{\rm c}^{(\nu)}\big)\big\|_{2}\\
&\leq\big\|A^{\frac{1}{2}}\big(I-SM_{\rm s}^{-T}S^{T}A\big)PA_{\rm c}^{-\frac{1}{2}}\big\|_{2}\big\|A_{\rm c}^{-1}\mathbf{r}_{\rm c}-\mathbf{e}_{\rm c}^{(\nu)}\big\|_{A_{\rm c}}\\
&=\big\|A_{\rm c}^{-\frac{1}{2}}P^{T}\big(I-ASM_{\rm s}^{-1}S^{T}\big)A^{\frac{1}{2}}\big\|_{2}\big\|A_{\rm c}^{-1}\mathbf{r}_{\rm c}-\mathbf{e}_{\rm c}^{(\nu)}\big\|_{A_{\rm c}}\\
&\leq\varepsilon\big\|A_{\rm c}^{-\frac{1}{2}}P^{T}A^{\frac{1}{2}}\big(I-A^{\frac{1}{2}}SM_{\rm s}^{-1}S^{T}A^{\frac{1}{2}}\big)\big\|_{2}\|\mathbf{r}_{\rm c}\|_{A_{\rm c}^{-1}}.
\end{align*}
From the first two steps of Algorithm~\ref{alg:iTL}, we have
\begin{displaymath}
\mathbf{r}_{\rm c}=P^{T}A\big(I-SM_{\rm s}^{-1}S^{T}A\big)\big(\mathbf{u}-\mathbf{u}^{(0)}\big),
\end{displaymath}
and hence
\begin{align*}
\|\mathbf{r}_{\rm c}\|_{A_{\rm c}^{-1}}&=\big\|A_{\rm c}^{-\frac{1}{2}}P^{T}A\big(I-SM_{\rm s}^{-1}S^{T}A\big)\big(\mathbf{u}-\mathbf{u}^{(0)}\big)\big\|_{2}\\
&=\big\|A_{\rm c}^{-\frac{1}{2}}P^{T}A^{\frac{1}{2}}\big(I-A^{\frac{1}{2}}SM_{\rm s}^{-1}S^{T}A^{\frac{1}{2}}\big)A^{\frac{1}{2}}\big(\mathbf{u}-\mathbf{u}^{(0)}\big)\big\|_{2}\\
&\leq\big\|A_{\rm c}^{-\frac{1}{2}}P^{T}A^{\frac{1}{2}}\big(I-A^{\frac{1}{2}}SM_{\rm s}^{-1}S^{T}A^{\frac{1}{2}}\big)\big\|_{2}\big\|\mathbf{u}-\mathbf{u}^{(0)}\big\|_{A}.
\end{align*}
Note that
\begin{align*}
&\big\|A_{\rm c}^{-\frac{1}{2}}P^{T}A^{\frac{1}{2}}\big(I-A^{\frac{1}{2}}SM_{\rm s}^{-1}S^{T}A^{\frac{1}{2}}\big)\big\|_{2}^{2}\\
&=\lambda_{\max}\Big(\big(I-A^{\frac{1}{2}}SM_{\rm s}^{-T}S^{T}A^{\frac{1}{2}}\big)A^{\frac{1}{2}}PA_{\rm c}^{-1}P^{T}A^{\frac{1}{2}}\big(I-A^{\frac{1}{2}}SM_{\rm s}^{-1}S^{T}A^{\frac{1}{2}}\big)\Big)\\
&=\lambda_{\max}\Big(A^{-\frac{1}{2}}\big(I-A^{\frac{1}{2}}SM_{\rm s}^{-T}S^{T}A^{\frac{1}{2}}\big)A^{\frac{1}{2}}PA_{\rm c}^{-1}P^{T}A^{\frac{1}{2}}\big(I-A^{\frac{1}{2}}SM_{\rm s}^{-1}S^{T}A^{\frac{1}{2}}\big)A^{\frac{1}{2}}\Big)\\
&=\lambda_{\max}\big((I-SM_{\rm s}^{-T}S^{T}A)\varPi_{A}(I-SM_{\rm s}^{-1}S^{T}A)\big)\\
&=\lambda_{\max}\big((I-SM_{\rm s}^{-1}S^{T}A)(I-SM_{\rm s}^{-T}S^{T}A)\varPi_{A}\big)\\
&=\lambda_{\max}\big((I-S\widetilde{M}_{\rm s}^{-1}S^{T}A)\varPi_{A}\big).
\end{align*}
We then have
\begin{displaymath}
\|\mathbf{u}_{\rm TL}-\mathbf{u}_{\rm ITL}\|_{A}\leq\varepsilon\lambda_{\max}\big((I-S\widetilde{M}_{\rm s}^{-1}S^{T}A)\varPi_{A}\big)\big\|\mathbf{u}-\mathbf{u}^{(0)}\big\|_{A}.
\end{displaymath}
Thus,
\begin{align*}
\|\mathbf{u}-\mathbf{u}_{\rm ITL}\|_{A}&\leq\|\mathbf{u}-\mathbf{u}_{\rm TL}\|_{A}+\|\mathbf{u}_{\rm TL}-\mathbf{u}_{\rm ITL}\|_{A}\\
&=\big\|E_{\rm TL}\big(\mathbf{u}-\mathbf{u}^{(0)}\big)\big\|_{A}+\|\mathbf{u}_{\rm TL}-\mathbf{u}_{\rm ITL}\|_{A}\\
&\leq\big(\|E_{\rm TL}\|_{A}+\varepsilon\lambda_{\max}\big((I-S\widetilde{M}_{\rm s}^{-1}S^{T}A)\varPi_{A}\big)\big)\big\|\mathbf{u}-\mathbf{u}^{(0)}\big\|_{A}.
\end{align*}
The desired estimate then follows immediately from~\eqref{XZ-1} and~\eqref{XZ-c}.
\end{proof}

As a corollary of Theorem~\ref{thm:iTL1}, the following convergence estimate for Algorithm~\ref{alg:iTG} holds.

\begin{corollary}
Under the condition~\eqref{rela-error}, the approximate solution, $\mathbf{u}_{\rm ITG}$, generated by Algorithm~{\rm\ref{alg:iTG}} satisfies that
\begin{equation}\label{iTG-est1}
\|\mathbf{u}-\mathbf{u}_{\rm ITG}\|_{A}\leq\bigg(1-\frac{1}{K_{\rm TG}}+\varepsilon\big(1-\lambda_{\min}^{+}(\widetilde{M}^{-1}A\varPi_{A})\big)\bigg)\big\|\mathbf{u}-\mathbf{u}^{(0)}\big\|_{A},
\end{equation}
where $K_{\rm TG}$ and $\varepsilon$ are given by~\eqref{K-TG} and~\eqref{eps}, respectively.
\end{corollary}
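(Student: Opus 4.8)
The plan is to obtain the corollary as a direct specialization of Theorem~\ref{thm:iTL1} to the setting $n_{\rm s}=n$, $S=I_{n}$, $M_{\rm s}=M$, under which Algorithm~\ref{alg:iTL} collapses to Algorithm~\ref{alg:iTG}. First I would record that with $S=I_{n}$ we have $A_{\rm s}=S^{T}AS=A$, so $\widetilde{M}_{\rm s}=M^{T}(M+M^{T}-A)^{-1}M=\widetilde{M}$, and $S\widetilde{M}_{\rm s}^{-1}S^{T}=\widetilde{M}^{-1}$; similarly $K_{\rm TL}$ reduces to $K_{\rm TG}$, since the identity~\eqref{XZ-1}--\eqref{K-TL} with $S=I$ is exactly the two-grid identity~\eqref{XZ-2}--\eqref{K-TG} (this is the abstract-to-concrete reduction already alluded to in the text, and can also be seen directly by noting $E_{\rm TL}=E_{\rm TG}$ in this case).

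The one genuine point to check is which branch of $\sigma_{\rm ITL}$ applies. With $S=I_{n}$, the matrix $S^{T}AP=AP$ has full column rank $n_{\rm c}$, because $A$ is nonsingular and $P$ has full column rank. Hence the condition $\rank(S^{T}AP)=n_{\rm c}$ always holds in the two-grid setting, so only the first branch of $\sigma_{\rm ITL}$ is relevant, giving the constant $1-\tfrac{1}{K_{\rm TG}}+\varepsilon(1-\mu_{\rm TL})$ with $\mu_{\rm TL}=\lambda_{\min}^{+}(S\widetilde{M}_{\rm s}^{-1}S^{T}A\varPi_{A})=\lambda_{\min}^{+}(\widetilde{M}^{-1}A\varPi_{A})$. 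Substituting these identifications into~\eqref{iTL-est1} yields~\eqref{iTG-est1} verbatim.

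I expect no real obstacle here; the entire argument is bookkeeping of substitutions. The only thing worth stating carefully is the rank fact $\rank(AP)=n_{\rm c}$ so that the reader sees why the second (worse) branch never occurs, and the reduction $\widetilde{M}_{\rm s}=\widetilde{M}$, $K_{\rm TL}=K_{\rm TG}$, which make the two displayed convergence factors coincide. A short paragraph of three or four sentences carrying out these identifications constitutes the complete proof.

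\begin{proof}
As noted in Section~\ref{sec:pre}, taking $n_{\rm s}=n$, $S=I_{n}$, and $M_{\rm s}=M$ reduces Algorithm~\ref{alg:iTL} to Algorithm~\ref{alg:iTG}, so the estimate~\eqref{iTL-est1} of Theorem~\ref{thm:iTL1} applies with $\mathbf{u}_{\rm ITL}=\mathbf{u}_{\rm ITG}$. With $S=I_{n}$ we have $A_{\rm s}=S^{T}AS=A$, whence
\begin{displaymath}
\widetilde{M}_{\rm s}=M^{T}(M+M^{T}-A_{\rm s})^{-1}M=M^{T}(M+M^{T}-A)^{-1}M=\widetilde{M},
\end{displaymath}
so that $S\widetilde{M}_{\rm s}^{-1}S^{T}A\varPi_{A}=\widetilde{M}^{-1}A\varPi_{A}$ and $\mu_{\rm TL}=\lambda_{\min}^{+}\big(\widetilde{M}^{-1}A\varPi_{A}\big)$. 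Moreover, in this case $E_{\rm TL}=E_{\rm TG}$, and comparing the identities~\eqref{XZ-1}--\eqref{K-TL} with~\eqref{XZ-2}--\eqref{K-TG} gives $K_{\rm TL}=K_{\rm TG}$. Finally, since $A$ is SPD (hence nonsingular) and $P$ has full column rank, $S^{T}AP=AP$ has full column rank $n_{\rm c}$, so $\rank(S^{T}AP)=n_{\rm c}$ and only the first branch of $\sigma_{\rm ITL}$ in Theorem~\ref{thm:iTL1} is in force. Substituting these identifications into~\eqref{iTL-est1} yields~\eqref{iTG-est1}.
\end{proof}
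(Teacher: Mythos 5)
Your proposal is correct and is exactly the route the paper intends: the corollary is stated as an immediate specialization of Theorem~\ref{thm:iTL1} to $n_{\rm s}=n$, $S=I_{n}$, $M_{\rm s}=M$, and you carry out precisely that substitution, correctly identifying $\widetilde{M}_{\rm s}=\widetilde{M}$, $K_{\rm TL}=K_{\rm TG}$, and the key point that $\rank(AP)=n_{\rm c}$ forces the first branch of $\sigma_{\rm ITL}$.
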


\subsection{Convergence of Algorithm~\ref{alg:iTL} without postsmoothing}

In Algorithm~\ref{alg:TL}, the main purpose of postsmoothing is to preserve the symmetry of its iteration matrix in $A$-inner product, which brings a lot of convenience to the theoretical analysis of two-level methods. In practice, it is not necessary to perform the postsmoothing step when the presmoothing and coarse-level correction processes complement each other very well. In this subsection, we present a convergence analysis of Algorithm~\ref{alg:iTL} without postsmoothing, in which case the output is denoted by $\mathbf{u}^{(2)}$.

For Algorithm~\ref{alg:TL} without postsmoothing, its iteration matrix is of the form
\begin{displaymath}
(I-\varPi_{A})(I-SM_{\rm s}^{-1}S^{T}A),
\end{displaymath}
where $\varPi_{A}$ is defined by~\eqref{piA}. Accordingly, its convergence factor is
\begin{align*}
&\big\|(I-\varPi_{A})(I-SM_{\rm s}^{-1}S^{T}A)\big\|_{A}\\
&=\big\|\big(I-A^{\frac{1}{2}}\varPi_{A}A^{-\frac{1}{2}}\big)\big(I-A^{\frac{1}{2}}SM_{\rm s}^{-1}S^{T}A^{\frac{1}{2}}\big)\big\|_{2}\\
&=\lambda_{\max}^{\frac{1}{2}}\Big(\big(I-A^{\frac{1}{2}}SM_{\rm s}^{-T}S^{T}A^{\frac{1}{2}}\big)\big(I-A^{\frac{1}{2}}\varPi_{A}A^{-\frac{1}{2}}\big)\big(I-A^{\frac{1}{2}}SM_{\rm s}^{-1}S^{T}A^{\frac{1}{2}}\big)\Big)\\
&=\lambda_{\max}^{\frac{1}{2}}\Big(A^{-\frac{1}{2}}\big(I-A^{\frac{1}{2}}SM_{\rm s}^{-T}S^{T}A^{\frac{1}{2}}\big)\big(I-A^{\frac{1}{2}}\varPi_{A}A^{-\frac{1}{2}}\big)\big(I-A^{\frac{1}{2}}SM_{\rm s}^{-1}S^{T}A^{\frac{1}{2}}\big)A^{\frac{1}{2}}\Big)\\
&=\lambda_{\max}^{\frac{1}{2}}\big((I-SM_{\rm s}^{-T}S^{T}A)(I-\varPi_{A})(I-SM_{\rm s}^{-1}S^{T}A)\big)\\
&=\lambda_{\max}^{\frac{1}{2}}(E_{\rm TL})\\
&=\bigg(1-\frac{1}{K_{\rm TL}}\bigg)^{\frac{1}{2}},
\end{align*}
where, in the last equality, we have used the facts~\eqref{norm-ETL} and~\eqref{XZ-1}.

The following theorem provides a convergence estimate for Algorithm~\ref{alg:iTL} without postsmoothing.

\begin{theorem}\label{thm:iTL2}
Under the condition~\eqref{rela-error}, the approximate solution, $\mathbf{u}^{(2)}$, generated by Algorithm~{\rm\ref{alg:iTL}} without postsmoothing satisfies that
\begin{equation}\label{iTL-est2}
\big\|\mathbf{u}-\mathbf{u}^{(2)}\big\|_{A}\leq\bigg(1-\frac{1-\varepsilon^{2}}{K_{\rm TL}}\bigg)^{\frac{1}{2}}\big\|\mathbf{u}-\mathbf{u}^{(0)}\big\|_{A},
\end{equation}
where $K_{\rm TL}$ and $\varepsilon$ are given by~\eqref{K-TL} and~\eqref{eps}, respectively.
\end{theorem}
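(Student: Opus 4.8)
The plan is to work in the $A^{1/2}$-transformed coordinates, where everything becomes a statement about symmetric matrices, and to expand the error of Algorithm~\ref{alg:iTL} without postsmoothing as a perturbation of the exact (postsmoothing-free) two-level error operator. Writing $\mathbf{e}^{(0)}=\mathbf{u}-\mathbf{u}^{(0)}$, the first two steps give $\mathbf{u}-\mathbf{u}^{(1)}=(I-SM_{\rm s}^{-1}S^{T}A)\mathbf{e}^{(0)}$, and the prolongation step gives
\begin{displaymath}
\mathbf{u}-\mathbf{u}^{(2)}=(I-SM_{\rm s}^{-1}S^{T}A)\mathbf{e}^{(0)}-P\mathbf{e}_{\rm c}^{(\nu)}
=(I-\varPi_{A})(I-SM_{\rm s}^{-1}S^{T}A)\mathbf{e}^{(0)}+P\big(A_{\rm c}^{-1}\mathbf{r}_{\rm c}-\mathbf{e}_{\rm c}^{(\nu)}\big).
\end{displaymath}
So the error splits as $\mathbf{u}-\mathbf{u}^{(2)} = \mathbf{w} + P\mathbf{d}$, where $\mathbf{w}$ is the exact postsmoothing-free error and $\mathbf{d}:=A_{\rm c}^{-1}\mathbf{r}_{\rm c}-\mathbf{e}_{\rm c}^{(\nu)}$ is the coarse-solver defect, controlled by Lemma~\ref{lem:accu} via $\|\mathbf{d}\|_{A_{\rm c}}\le\varepsilon\|\mathbf{r}_{\rm c}\|_{A_{\rm c}^{-1}}$.

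Next I would compute $\|\mathbf{u}-\mathbf{u}^{(2)}\|_A^2 = \|\mathbf{w}\|_A^2 + 2\,\mathbf{w}^{T}AP\mathbf{d} + \|P\mathbf{d}\|_A^2$ and look for cancellation in the cross term. The key algebraic observation is that $(I-\varPi_{A})$ is an $A$-orthogonal projection onto $\Null(P^{T}A)$ (since $\varPi_{A}=PA_{\rm c}^{-1}P^{T}A$ satisfies $\varPi_{A}^{2}=\varPi_{A}$ and $A\varPi_{A}$ is symmetric), so $\mathbf{w}=(I-\varPi_{A})\mathbf{z}$ with $\mathbf{z}=(I-SM_{\rm s}^{-1}S^{T}A)\mathbf{e}^{(0)}$ lies in $\Null(P^{T}A)$, whence $\mathbf{w}^{T}AP\mathbf{d}=(P^{T}A\mathbf{w})^{T}\mathbf{d}=0$. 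Therefore the cross term vanishes \emph{exactly} and
\begin{displaymath}
\big\|\mathbf{u}-\mathbf{u}^{(2)}\big\|_A^2 = \|\mathbf{w}\|_A^2 + \|\mathbf{d}\|_{A_{\rm c}}^2,
\end{displaymath}
using $\|P\mathbf{d}\|_A^2=\mathbf{d}^{T}P^{T}AP\mathbf{d}=\|\mathbf{d}\|_{A_{\rm c}}^2$.

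It remains to bound each term by a common multiple of $\|\mathbf{e}^{(0)}\|_A^2$. For the first, $\|\mathbf{w}\|_A\le\|(I-\varPi_{A})(I-SM_{\rm s}^{-1}S^{T}A)\|_A\|\mathbf{e}^{(0)}\|_A=(1-1/K_{\rm TL})^{1/2}\|\mathbf{e}^{(0)}\|_A$ by the convergence-factor computation carried out just before the theorem statement, so $\|\mathbf{w}\|_A^2\le(1-1/K_{\rm TL})\|\mathbf{e}^{(0)}\|_A^2$. For the second, the residual identity $\mathbf{r}_{\rm c}=P^{T}A(I-SM_{\rm s}^{-1}S^{T}A)\mathbf{e}^{(0)}$ (from the proof of Theorem~\ref{thm:iTL1}) gives $\|\mathbf{d}\|_{A_{\rm c}}^2\le\varepsilon^2\|\mathbf{r}_{\rm c}\|_{A_{\rm c}^{-1}}^2\le\varepsilon^{2}\big\|A_{\rm c}^{-1/2}P^{T}A^{1/2}(I-A^{1/2}SM_{\rm s}^{-1}S^{T}A^{1/2})\big\|_2^2\|\mathbf{e}^{(0)}\|_A^2$, and the chain of eigenvalue identities already established inside the proof of Theorem~\ref{thm:iTL1} shows that norm-squared equals $\lambda_{\max}\big((I-S\widetilde{M}_{\rm s}^{-1}S^{T}A)\varPi_{A}\big)$. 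The cleanest route, however, is to observe directly from $\mathbf{w}\in\Null(P^{T}A)$ that $\mathbf{r}_{\rm c}=P^{T}A\mathbf{z}=P^{T}A\varPi_{A}\mathbf{z}$ and $\|\mathbf{r}_{\rm c}\|_{A_{\rm c}^{-1}}^{2}=\|\varPi_{A}\mathbf{z}\|_A^{2}=\|\mathbf{z}\|_A^{2}-\|\mathbf{w}\|_A^{2}\le\|\mathbf{e}^{(0)}\|_A^{2}-\|\mathbf{w}\|_A^{2}$, the last step because $I-SM_{\rm s}^{-1}S^{T}A$ is a $\|\cdot\|_A$-nonexpansive map (equivalently $A^{-1}-S\widetilde{M}_{\rm s}^{-1}S^{T}$ is SPSD, shown in Lemma~\ref{lem:XZ-c}; more simply $\|(I-SM_{\rm s}^{-1}S^{T}A)\mathbf{e}^{(0)}\|_A\le\|\mathbf{e}^{(0)}\|_A$ since $A^{1/2}(I-SM_{\rm s}^{-1}S^{T}A)A^{-1/2}$ has spectral norm at most $1$). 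Combining,
\begin{displaymath}
\big\|\mathbf{u}-\mathbf{u}^{(2)}\big\|_A^2 \le \|\mathbf{w}\|_A^2 + \varepsilon^{2}\big(\|\mathbf{e}^{(0)}\|_A^2-\|\mathbf{w}\|_A^2\big) = (1-\varepsilon^2)\|\mathbf{w}\|_A^2 + \varepsilon^2\|\mathbf{e}^{(0)}\|_A^2 \le \Big(1-\frac{1-\varepsilon^2}{K_{\rm TL}}\Big)\|\mathbf{e}^{(0)}\|_A^2,
\end{displaymath}
and taking square roots yields~\eqref{iTL-est2}. The only delicate point is getting the cross term to vanish exactly and recognizing $\|\mathbf{r}_{\rm c}\|_{A_{\rm c}^{-1}}^2=\|\varPi_{A}\mathbf{z}\|_A^2$; once the $A$-orthogonal-projection structure of $I-\varPi_A$ is used, the rest is the Pythagorean bookkeeping above.
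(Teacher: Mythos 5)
Your proof is correct, and it reaches the bound by a genuinely different route from the paper's. Both arguments start from the same exact identity (your Pythagorean splitting $\|\mathbf{u}-\mathbf{u}^{(2)}\|_{A}^{2}=\|\mathbf{w}\|_{A}^{2}+\|\mathbf{d}\|_{A_{\rm c}}^{2}$ is, after unwinding, the same as the paper's expansion $\|\mathbf{z}\|_{A}^{2}-2\mathbf{r}_{\rm c}^{T}\mathbf{e}_{\rm c}^{(\nu)}+\|\mathbf{e}_{\rm c}^{(\nu)}\|_{A_{\rm c}}^{2}$ combined with the relation~\eqref{rTe-low}, which is just $\|\mathbf{d}\|_{A_{\rm c}}^{2}\leq\varepsilon^{2}\|\mathbf{r}_{\rm c}\|_{A_{\rm c}^{-1}}^{2}$ rearranged). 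Where you diverge is the finish: the paper packages the bound as a quadratic form in $\mathbf{u}-\mathbf{u}^{(0)}$ with the symmetric matrix $E=(I-A^{\frac{1}{2}}SM_{\rm s}^{-T}S^{T}A^{\frac{1}{2}})(\varepsilon^{2}I+(1-\varepsilon^{2})(I-\varPi))(I-A^{\frac{1}{2}}SM_{\rm s}^{-1}S^{T}A^{\frac{1}{2}})$ and invokes Weyl's eigenvalue inequality to split $\lambda_{\max}(E)\leq\varepsilon^{2}\cdot 1+(1-\varepsilon^{2})(1-1/K_{\rm TL})$, whereas you exploit the $A$-orthogonality of $\varPi_{A}$ twice --- once to kill the cross term and once to write $\|\mathbf{r}_{\rm c}\|_{A_{\rm c}^{-1}}^{2}=\|\mathbf{z}\|_{A}^{2}-\|\mathbf{w}\|_{A}^{2}$ --- and then close with scalar arithmetic. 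Your version is more elementary (no Weyl, no symmetrization) and makes the geometric mechanism transparent; the paper's version is set up so that the Weyl splitting immediately yields the sharper two-grid bound~\eqref{iTG-est2} with the extra term $\varepsilon^{2}\lambda_{\min}(\widetilde{M}^{-1}A)$, which in your framework would require retaining $\varepsilon^{2}\|\mathbf{z}\|_{A}^{2}$ rather than relaxing it to $\varepsilon^{2}\|\mathbf{e}^{(0)}\|_{A}^{2}$ (this works, but you would need to say so). All the individual claims you make --- $\varPi_{A}^{2}=\varPi_{A}$ with $A\varPi_{A}$ symmetric, $P^{T}A\mathbf{w}=\mathbf{0}$, $\|P\mathbf{d}\|_{A}=\|\mathbf{d}\|_{A_{\rm c}}$, the nonexpansiveness of $I-SM_{\rm s}^{-1}S^{T}A$ in $\|\cdot\|_{A}$, and the value of the postsmoothing-free convergence factor --- are verified in or immediately follow from material already in the paper.
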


\begin{proof}
From the fourth step of Algorithm~\ref{alg:iTL}, we have
\begin{displaymath}
\mathbf{u}-\mathbf{u}^{(2)}=\mathbf{u}-\mathbf{u}^{(1)}-P\mathbf{e}_{\rm c}^{(\nu)}.
\end{displaymath}
Then
\begin{align*}
\big\|\mathbf{u}-\mathbf{u}^{(2)}\big\|_{A}^{2}&=\big(\mathbf{u}-\mathbf{u}^{(1)}-P\mathbf{e}_{\rm c}^{(\nu)}\big)^{T}A\big(\mathbf{u}-\mathbf{u}^{(1)}-P\mathbf{e}_{\rm c}^{(\nu)}\big)\\
&=\big(\mathbf{u}-\mathbf{u}^{(1)}\big)^{T}A\big(\mathbf{u}-\mathbf{u}^{(1)}\big)-2\big(\mathbf{u}-\mathbf{u}^{(1)}\big)^{T}AP\mathbf{e}_{\rm c}^{(\nu)}+\big\|\mathbf{e}_{\rm c}^{(\nu)}\big\|_{A_{\rm c}}^{2}\\
&=\big(\mathbf{u}-\mathbf{u}^{(1)}\big)^{T}A\big(\mathbf{u}-\mathbf{u}^{(1)}\big)-2\mathbf{r}_{\rm c}^{T}\mathbf{e}_{\rm c}^{(\nu)}+\big\|\mathbf{e}_{\rm c}^{(\nu)}\big\|_{A_{\rm c}}^{2}\\
&\leq\big(\mathbf{u}-\mathbf{u}^{(1)}\big)^{T}A\big(\mathbf{u}-\mathbf{u}^{(1)}\big)-(1-\varepsilon^{2})\|\mathbf{r}_{\rm c}\|_{A_{\rm c}^{-1}}^{2}\\
&=\big(\mathbf{u}-\mathbf{u}^{(1)}\big)^{T}A\big(\mathbf{u}-\mathbf{u}^{(1)}\big)-(1-\varepsilon^{2})\big(\mathbf{u}-\mathbf{u}^{(1)}\big)^{T}A\varPi_{A}\big(\mathbf{u}-\mathbf{u}^{(1)}\big)\\
&=\big(\mathbf{u}-\mathbf{u}^{(1)}\big)^{T}A\big(I-(1-\varepsilon^{2})\varPi_{A}\big)\big(\mathbf{u}-\mathbf{u}^{(1)}\big),
\end{align*}
where we have used the relation~\eqref{rTe-low}. Let
\begin{displaymath}
\varPi=A^{\frac{1}{2}}\varPi_{A}A^{-\frac{1}{2}}=A^{\frac{1}{2}}PA_{\rm c}^{-1}P^{T}A^{\frac{1}{2}}
\end{displaymath}
and
\begin{displaymath}
E=\big(I-A^{\frac{1}{2}}SM_{\rm s}^{-T}S^{T}A^{\frac{1}{2}}\big)\big(I-(1-\varepsilon^{2})\varPi\big)\big(I-A^{\frac{1}{2}}SM_{\rm s}^{-1}S^{T}A^{\frac{1}{2}}\big).
\end{displaymath}
We then have
\begin{align*}
&\big(\mathbf{u}-\mathbf{u}^{(1)}\big)^{T}A\big(I-(1-\varepsilon^{2})\varPi_{A}\big)\big(\mathbf{u}-\mathbf{u}^{(1)}\big)\\
&=\big(\mathbf{u}-\mathbf{u}^{(0)}\big)^{T}\big(I-ASM_{\rm s}^{-T}S^{T}\big)A\big(I-(1-\varepsilon^{2})\varPi_{A}\big)\big(I-SM_{\rm s}^{-1}S^{T}A\big)\big(\mathbf{u}-\mathbf{u}^{(0)}\big)\\
&=\big(\mathbf{u}-\mathbf{u}^{(0)}\big)^{T}A^{\frac{1}{2}}EA^{\frac{1}{2}}\big(\mathbf{u}-\mathbf{u}^{(0)}\big).
\end{align*}
It follows that
\begin{displaymath}
\big\|\mathbf{u}-\mathbf{u}^{(2)}\big\|_{A}^{2}\leq\big(\mathbf{u}-\mathbf{u}^{(0)}\big)^{T}A^{\frac{1}{2}}EA^{\frac{1}{2}}\big(\mathbf{u}-\mathbf{u}^{(0)}\big),
\end{displaymath}
which, together with the symmetry of $E$, leads to
\begin{equation}\label{u-u2-up}
\big\|\mathbf{u}-\mathbf{u}^{(2)}\big\|_{A}^{2}\leq\lambda_{\max}(E)\big\|\mathbf{u}-\mathbf{u}^{(0)}\big\|_{A}^{2}.
\end{equation}

It is easy to see that
\begin{displaymath}
E=\big(I-A^{\frac{1}{2}}SM_{\rm s}^{-T}S^{T}A^{\frac{1}{2}}\big)\big(\varepsilon^{2}I+(1-\varepsilon^{2})(I-\varPi)\big)\big(I-A^{\frac{1}{2}}SM_{\rm s}^{-1}S^{T}A^{\frac{1}{2}}\big).
\end{displaymath}
By the Weyl's theorem (see, e.g.,~\cite[Theorem~4.3.1]{Horn2013}), we have
\begin{align*}
\lambda_{\max}(E)&\leq\varepsilon^{2}\lambda_{\max}\Big(\big(I-A^{\frac{1}{2}}SM_{\rm s}^{-T}S^{T}A^{\frac{1}{2}}\big)\big(I-A^{\frac{1}{2}}SM_{\rm s}^{-1}S^{T}A^{\frac{1}{2}}\big)\Big)\\
&\quad+(1-\varepsilon^{2})\lambda_{\max}\Big(\big(I-A^{\frac{1}{2}}SM_{\rm s}^{-T}S^{T}A^{\frac{1}{2}}\big)(I-\varPi)\big(I-A^{\frac{1}{2}}SM_{\rm s}^{-1}S^{T}A^{\frac{1}{2}}\big)\Big)\\
&=\varepsilon^{2}\lambda_{\max}\big(I-A^{\frac{1}{2}}S\overline{M}_{\rm s}^{-1}S^{T}A^{\frac{1}{2}}\big)+(1-\varepsilon^{2})\lambda_{\max}\big(A^{\frac{1}{2}}E_{\rm TL}A^{-\frac{1}{2}}\big),
\end{align*}
where $E_{\rm TL}$ and $\overline{M}_{\rm s}$ are given by~\eqref{E-TL1} and~\eqref{barMs}, respectively. Since
\begin{displaymath}
\lambda_{\max}\big(I-A^{\frac{1}{2}}S\overline{M}_{\rm s}^{-1}S^{T}A^{\frac{1}{2}}\big)=1-\lambda_{\min}\big(A^{\frac{1}{2}}S\overline{M}_{\rm s}^{-1}S^{T}A^{\frac{1}{2}}\big)=1
\end{displaymath}
and
\begin{displaymath}
\lambda_{\max}\big(A^{\frac{1}{2}}E_{\rm TL}A^{-\frac{1}{2}}\big)=\|E_{\rm TL}\|_{A}=1-\frac{1}{K_{\rm TL}},
\end{displaymath}
we obtain
\begin{displaymath}
\lambda_{\max}(E)\leq\varepsilon^{2}+(1-\varepsilon^{2})\bigg(1-\frac{1}{K_{\rm TL}}\bigg)=1-\frac{1-\varepsilon^{2}}{K_{\rm TL}},
\end{displaymath}
which, combined with~\eqref{u-u2-up}, yields the estimate~\eqref{iTL-est2}.
\end{proof}

Similarly, the following convergence estimate for Algorithm~{\rm\ref{alg:iTG}} without postsmoothing holds.

\begin{theorem}
Under the condition~\eqref{rela-error}, the approximate solution, $\mathbf{u}^{(2)}$, generated by Algorithm~{\rm\ref{alg:iTG}} without postsmoothing satisfies that
\begin{equation}\label{iTG-est2}
\big\|\mathbf{u}-\mathbf{u}^{(2)}\big\|_{A}\leq\bigg(1-\frac{1-\varepsilon^{2}}{K_{\rm TG}}-\varepsilon^{2}\lambda_{\min}(\widetilde{M}^{-1}A)\bigg)^{\frac{1}{2}}\big\|\mathbf{u}-\mathbf{u}^{(0)}\big\|_{A},
\end{equation}
where $K_{\rm TG}$ and $\varepsilon$ are given by~\eqref{K-TG} and~\eqref{eps}, respectively.
\end{theorem}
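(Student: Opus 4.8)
The plan is to replay the proof of Theorem~\ref{thm:iTL2} in the two-grid setting $n_{\rm s}=n$, $S=I_n$, $M_{\rm s}=M$, carrying the argument verbatim up to the Weyl estimate, and then to exploit the fact that relaxation now acts on the whole space to recover the extra term $\varepsilon^{2}\lambda_{\min}(\widetilde{M}^{-1}A)$. First I would record that, without postsmoothing, the output is $\mathbf{u}^{(2)}=\mathbf{u}^{(1)}+P\mathbf{e}_{\rm c}^{(\nu)}$ with $\mathbf{u}-\mathbf{u}^{(1)}=(I-M^{-1}A)(\mathbf{u}-\mathbf{u}^{(0)})$ and $\mathbf{r}_{\rm c}=P^{T}A(\mathbf{u}-\mathbf{u}^{(1)})$. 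Expanding $\|\mathbf{u}-\mathbf{u}^{(2)}\|_{A}^{2}$ exactly as before and invoking the lower bound~\eqref{rTe-low}, I would arrive at
\[
\big\|\mathbf{u}-\mathbf{u}^{(2)}\big\|_{A}^{2}\leq\big(\mathbf{u}-\mathbf{u}^{(0)}\big)^{T}A^{\frac{1}{2}}EA^{\frac{1}{2}}\big(\mathbf{u}-\mathbf{u}^{(0)}\big)\leq\lambda_{\max}(E)\big\|\mathbf{u}-\mathbf{u}^{(0)}\big\|_{A}^{2},
\]
where, with $\varPi=A^{\frac{1}{2}}PA_{\rm c}^{-1}P^{T}A^{\frac{1}{2}}$,
\[
E=\big(I-A^{\frac{1}{2}}M^{-T}A^{\frac{1}{2}}\big)\big(I-(1-\varepsilon^{2})\varPi\big)\big(I-A^{\frac{1}{2}}M^{-1}A^{\frac{1}{2}}\big)
\]
is symmetric. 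Everything thus reduces to bounding $\lambda_{\max}(E)$.

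Next I would split $I-(1-\varepsilon^{2})\varPi=\varepsilon^{2}I+(1-\varepsilon^{2})(I-\varPi)$ and apply Weyl's theorem to get
\[
\lambda_{\max}(E)\leq\varepsilon^{2}\lambda_{\max}\big((I-A^{\frac{1}{2}}M^{-T}A^{\frac{1}{2}})(I-A^{\frac{1}{2}}M^{-1}A^{\frac{1}{2}})\big)+(1-\varepsilon^{2})\lambda_{\max}\big(A^{\frac{1}{2}}E_{\rm TG}A^{-\frac{1}{2}}\big),
\]
the second term being $(1-\varepsilon^{2})(1-1/K_{\rm TG})$ by~\eqref{XZ-2}. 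The crucial point, where the argument genuinely departs from that of Theorem~\ref{thm:iTL2}, is the first term. In the general two-level case the analogous factor $I-A^{\frac{1}{2}}S\overline{M}_{\rm s}^{-1}S^{T}A^{\frac{1}{2}}$ has largest eigenvalue exactly $1$, since $S\overline{M}_{\rm s}^{-1}S^{T}$ is rank-deficient (rank $n_{\rm s}<n$) and so $\lambda_{\min}=0$. Here, with $S=I_n$, I would instead use the algebraic identity
\[
\big(I-A^{\frac{1}{2}}M^{-T}A^{\frac{1}{2}}\big)\big(I-A^{\frac{1}{2}}M^{-1}A^{\frac{1}{2}}\big)=I-A^{\frac{1}{2}}\overline{M}^{-1}A^{\frac{1}{2}},
\]
which follows by expanding the product and recognizing $\overline{M}^{-1}=M^{-1}+M^{-T}-M^{-T}AM^{-1}$; this factor is now full rank, so its largest eigenvalue is $1-\lambda_{\min}(\overline{M}^{-1}A)$, strictly below $1$.

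Finally I would reconcile the $\overline{M}$ arising naturally here with the $\widetilde{M}$ in the statement. Writing $Y=A^{\frac{1}{2}}M^{-1}A^{\frac{1}{2}}$, one has $A^{\frac{1}{2}}\overline{M}^{-1}A^{\frac{1}{2}}=Y+Y^{T}-Y^{T}Y=I-(I-Y^{T})(I-Y)$, whereas $A^{\frac{1}{2}}\widetilde{M}^{-1}A^{\frac{1}{2}}=Y+Y^{T}-YY^{T}=I-(I-Y)(I-Y^{T})$; since $(I-Y^{T})(I-Y)$ and $(I-Y)(I-Y^{T})$ share the same spectrum, it follows that $\lambda_{\min}(\overline{M}^{-1}A)=\lambda_{\min}(\widetilde{M}^{-1}A)$. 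Substituting the two evaluated pieces then yields
\[
\lambda_{\max}(E)\leq\varepsilon^{2}\big(1-\lambda_{\min}(\widetilde{M}^{-1}A)\big)+(1-\varepsilon^{2})\Big(1-\frac{1}{K_{\rm TG}}\Big)=1-\frac{1-\varepsilon^{2}}{K_{\rm TG}}-\varepsilon^{2}\lambda_{\min}(\widetilde{M}^{-1}A),
\]
which, combined with the first display, gives~\eqref{iTG-est2}. I expect the main subtlety to be exactly this last identification: ensuring that the contribution of the relaxation factor, which comes out in terms of $\overline{M}$, is correctly matched to the $\widetilde{M}$ appearing in the theorem, and that the product identity together with the $AB$--$BA$ spectrum argument is deployed precisely.
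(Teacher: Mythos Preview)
Your proposal is correct and follows essentially the same route as the paper: replay the proof of Theorem~\ref{thm:iTL2} in the two-grid setting and sharpen the relaxation term via the $AB$--$BA$ spectrum argument. The paper applies the spectrum swap first and then identifies $(I-A^{\frac12}M^{-1}A^{\frac12})(I-A^{\frac12}M^{-T}A^{\frac12})=I-A^{\frac12}\widetilde{M}^{-1}A^{\frac12}$ directly, whereas you identify the original product with $\overline{M}$ and then swap---but this is merely a reordering of the same two steps.
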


\begin{proof}
Observe that
\begin{align*}
&\lambda_{\max}\Big(\big(I-A^{\frac{1}{2}}M^{-T}A^{\frac{1}{2}}\big)\big(I-A^{\frac{1}{2}}M^{-1}A^{\frac{1}{2}}\big)\Big)\\
&=\lambda_{\max}\Big(\big(I-A^{\frac{1}{2}}M^{-1}A^{\frac{1}{2}}\big)\big(I-A^{\frac{1}{2}}M^{-T}A^{\frac{1}{2}}\big)\Big)\\
&=\lambda_{\max}\big(I-A^{\frac{1}{2}}\widetilde{M}^{-1}A^{\frac{1}{2}}\big)\\
&=1-\lambda_{\min}\big(A^{\frac{1}{2}}\widetilde{M}^{-1}A^{\frac{1}{2}}\big)\\
&=1-\lambda_{\min}(\widetilde{M}^{-1}A).
\end{align*}
The remainder of this proof is similar to that of Theorem~\ref{thm:iTL2}.
\end{proof}

\section{Examples} \label{sec:ex}

The theoretical framework developed in Section~\ref{sec:conv} is applicable for various coarse solvers, provided that the corresponding accuracy estimates are available. In this section, as examples, we introduce two numerical methods for solving the coarse-level system
\begin{equation}\label{coarse}
A_{\rm c}\mathbf{e}_{\rm c}=\mathbf{r}_{\rm c}.
\end{equation}
Discussions on how their accuracy estimates fit into our framework are also given.

\subsection{Conjugate gradient method}

The conjugate gradient (CG) method~\cite{Hestenes1952} is a well-known numerical algorithm for solving SPD problems. To find an approximate solution to the linear system~\eqref{coarse}, we perform $\ell$ iterations of the CG method with initial guess $\mathbf{e}_{\rm\s CG}^{(0)}\in\mathbb{R}^{n_{\rm c}}$. The resulting approximation is denoted by $\mathbf{e}_{\rm\s CG}^{(\ell)}$, which satisfies the following estimate (see, e.g.,~\cite[Theorem~38.5]{Trefethen1997}):
\begin{equation}\label{CG-est}
\big\|A_{\rm c}^{-1}\mathbf{r}_{\rm c}-\mathbf{e}_{\rm\s CG}^{(\ell)}\big\|_{A_{\rm c}}\leq 2\bigg(\frac{\sqrt{\kappa_{\rm c}}-1}{\sqrt{\kappa_{\rm c}}+1}\bigg)^{\ell}\big\|A_{\rm c}^{-1}\mathbf{r}_{\rm c}-\mathbf{e}_{\rm\s CG}^{(0)}\big\|_{A_{\rm c}},
\end{equation}
where $\kappa_{\rm c}=\frac{\lambda_{\max}(A_{\rm c})}{\lambda_{\min}(A_{\rm c})}$ is the spectral condition number of $A_{\rm c}$. If $\kappa_{\rm c}$ is very large, one may apply the CG method to an equivalent preconditioned system.

Under the setting of Algorithm~\ref{alg:iTL}, if $\nu=1$ and $\mathscr{B}_{\rm c}^{(1)}\llbracket\cdot\rrbracket$ is taken to be the CG solver, then the tolerance factor in~\eqref{accu-est} is
\begin{displaymath}
\varepsilon=2\bigg(\frac{\sqrt{\kappa_{\rm c}}-1}{\sqrt{\kappa_{\rm c}}+1}\bigg)^{\ell},
\end{displaymath}
which is less than $1$ if $\ell>\Big(\log_{2}\frac{\sqrt{\kappa_{\rm c}}+1}{\sqrt{\kappa_{\rm c}}-1}\Big)^{-1}$.

\subsection{Randomized coordinate descent method}

Besides those classical (deterministic) methods, one can use some randomized methods to solve~\eqref{coarse}.

Let $(A_{\rm c})_{i:}$ and $(\mathbf{r}_{\rm c})_{i}$ denote the $i$th row of $A_{\rm c}$ and the $i$th entry of $\mathbf{r}_{\rm c}$, respectively. The randomized coordinate descent (RCD) method~\cite{Leventhal2010,Nesterov2012,Gower2015} applied to~\eqref{coarse} can be described as follows:
\begin{equation}\label{RCD}
\mathbf{e}_{\rm\s RCD}^{(j+1)}=\mathop{\arg\min}_{\mathbf{x}_{\rm c}\in\mathbb{R}^{n_{\rm c}}}\big\|\mathbf{x}_{\rm c}-\mathbf{e}_{\rm\s RCD}^{(j)}\big\|_{A_{\rm c}}^{2} \quad \text{subject to} \quad (A_{\rm c})_{i:}\mathbf{x}_{\rm c}=(\mathbf{r}_{\rm c})_{i},
\end{equation}
where $i\in\{1,\ldots,n_{\rm c}\}$ is chosen randomly, with probability $p_{i}$. The solution to~\eqref{RCD} is given by
\begin{displaymath}
\mathbf{e}_{\rm\s RCD}^{(j+1)}=\mathbf{e}_{\rm\s RCD}^{(j)}+\frac{(\mathbf{r}_{\rm c})_{i}-(A_{\rm c})_{i:}\mathbf{e}_{\rm\s RCD}^{(j)}}{(A_{\rm c})_{ii}}\mathbf{d}_{i},
\end{displaymath}
where $(A_{\rm c})_{ii}$ and $\mathbf{d}_{i}$ denote the $(i,i)$-entry of $A_{\rm c}$ and the $i$th column of $I_{n_{\rm c}}$, respectively.

With the probability distribution $p_{i}=\frac{(A_{\rm c})_{ii}}{\tr(A_{\rm c})}$ ($i=1,\ldots,n_{\rm c}$), one can show that (see~\cite{Leventhal2010,Gower2015})
\begin{equation}\label{RCD-est1}
\mathbb{E}\Big[\big\|A_{\rm c}^{-1}\mathbf{r}_{\rm c}-\mathbf{e}_{\rm\s RCD}^{(\ell)}\big\|_{A_{\rm c}}^{2}\Big]\leq\bigg(1-\frac{\lambda_{\min}(A_{\rm c})}{\tr(A_{\rm c})}\bigg)^{\ell}\big\|A_{\rm c}^{-1}\mathbf{r}_{\rm c}-\mathbf{e}_{\rm\s RCD}^{(0)}\big\|_{A_{\rm c}}^{2},
\end{equation}
where $\mathbf{e}_{\rm\s RCD}^{(0)}\in\mathbb{R}^{n_{\rm c}}$ is an initial guess. By the Cauchy--Schwarz inequality for random variables, we have
\begin{displaymath}
\mathbb{E}\big[\big\|A_{\rm c}^{-1}\mathbf{r}_{\rm c}-\mathbf{e}_{\rm\s RCD}^{(\ell)}\big\|_{A_{\rm c}}\big]\leq\Big(\mathbb{E}\Big[\big\|A_{\rm c}^{-1}\mathbf{r}_{\rm c}-\mathbf{e}_{\rm\s RCD}^{(\ell)}\big\|_{A_{\rm c}}^{2}\Big]\Big)^{\frac{1}{2}},
\end{displaymath}
which, together with~\eqref{RCD-est1}, yields
\begin{equation}\label{RCD-est2}
\mathbb{E}\big[\big\|A_{\rm c}^{-1}\mathbf{r}_{\rm c}-\mathbf{e}_{\rm\s RCD}^{(\ell)}\big\|_{A_{\rm c}}\big]\leq\bigg(1-\frac{\lambda_{\min}(A_{\rm c})}{\tr(A_{\rm c})}\bigg)^{\frac{\ell}{2}}\big\|A_{\rm c}^{-1}\mathbf{r}_{\rm c}-\mathbf{e}_{\rm\s RCD}^{(0)}\big\|_{A_{\rm c}}.
\end{equation}

Under the setting of Algorithm~\ref{alg:iTL}, if $\nu=1$ and $\mathscr{B}_{\rm c}^{(1)}\llbracket\cdot\rrbracket$ is taken to be the RCD solver, then the expected tolerance factor in~\eqref{accu-est} is
\begin{displaymath}
\varepsilon=\bigg(1-\frac{\lambda_{\min}(A_{\rm c})}{\tr(A_{\rm c})}\bigg)^{\frac{\ell}{2}}.
\end{displaymath}
Similarly to the proofs of Theorems~\ref{thm:iTL1} and~\ref{thm:iTL2}, one can easily derive upper bounds for $\mathbb{E}[\|\mathbf{u}-\mathbf{u}_{\rm ITL}\|_{A}]$ and $\mathbb{E}\big[\big\|\mathbf{u}-\mathbf{u}^{(2)}\big\|_{A}\big]$ based on the estimates~\eqref{RCD-est1} and~\eqref{RCD-est2}.

\textit{An extension}: Let $\Omega$ be a random subset of $\{1,\ldots,n_{\rm c}\}$, and let $I_{:\Omega}\in\mathbb{R}^{n_{\rm c}\times|\Omega|}$ be a column concatenation of the columns of $I_{n_{\rm c}}$ indexed by $\Omega$. The randomized block-coordinate descent (RBCD) method~\cite{Richtarik2014} (also called the randomized Newton method~\cite{Gower2015,Qu2016}) applied to~\eqref{coarse} can be described by
\begin{equation}\label{RBCD}
\mathbf{e}_{\rm\s RBCD}^{(j+1)}=\mathop{\arg\min}_{\mathbf{x}_{\rm c}\in\mathbb{R}^{n_{\rm c}}}\big\|\mathbf{x}_{\rm c}-\mathbf{e}_{\rm\s RBCD}^{(j)}\big\|_{A_{\rm c}}^{2} \quad \text{subject to} \quad I_{:\Omega}^{T}A_{\rm c}\mathbf{x}_{\rm c}=I_{:\Omega}^{T}\mathbf{r}_{\rm c}.
\end{equation}
It was shown in~\cite{Gower2015,Qu2016} that
\begin{displaymath}
\mathbb{E}\Big[\big\|A_{\rm c}^{-1}\mathbf{r}_{\rm c}-\mathbf{e}_{\rm\s RBCD}^{(\ell)}\big\|_{A_{\rm c}}^{2}\Big]\leq\Big(1-\lambda_{\min}\big(\mathbb{E}\big[I_{:\Omega}\big(I_{:\Omega}^{T}A_{\rm c}I_{:\Omega}\big)^{-1}I_{:\Omega}^{T}A_{\rm c}\big]\big)\Big)^{\ell}\big\|A_{\rm c}^{-1}\mathbf{r}_{\rm c}-\mathbf{e}_{\rm\s RBCD}^{(0)}\big\|_{A_{\rm c}}^{2},
\end{displaymath}
where $\mathbf{e}_{\rm\s RBCD}^{(0)}\in\mathbb{R}^{n_{\rm c}}$ is an initial guess. Then
\begin{displaymath}
\mathbb{E}\big[\big\|A_{\rm c}^{-1}\mathbf{r}_{\rm c}-\mathbf{e}_{\rm\s RBCD}^{(\ell)}\big\|_{A_{\rm c}}\big]\leq\Big(1-\lambda_{\min}\big(\mathbb{E}\big[I_{:\Omega}\big(I_{:\Omega}^{T}A_{\rm c}I_{:\Omega}\big)^{-1}I_{:\Omega}^{T}A_{\rm c}\big]\big)\Big)^{\frac{\ell}{2}}\big\|A_{\rm c}^{-1}\mathbf{r}_{\rm c}-\mathbf{e}_{\rm\s RBCD}^{(0)}\big\|_{A_{\rm c}},
\end{displaymath}
in which case the expected tolerance factor in~\eqref{accu-est} is
\begin{displaymath}
\varepsilon=\Big(1-\lambda_{\min}\big(\mathbb{E}\big[I_{:\Omega}\big(I_{:\Omega}^{T}A_{\rm c}I_{:\Omega}\big)^{-1}I_{:\Omega}^{T}A_{\rm c}\big]\big)\Big)^{\frac{\ell}{2}}.
\end{displaymath}

\begin{remark}
In the third step of Algorithm~\ref{alg:iTL}, we are allowed to use different solvers at each iteration. As a result, some hybrid algorithms can be designed by combining different methods. For example, if $\nu=2$, one may choose $\mathscr{B}_{\rm c}^{(1)}\llbracket\cdot\rrbracket$ and $\mathscr{B}_{\rm c}^{(2)}\llbracket\cdot\rrbracket$ as the CG and RCD solvers, respectively. In view of Lemma~\ref{lem:accu}, the corresponding accuracy estimate follows immediately from~\eqref{CG-est} and~\eqref{RCD-est2}.
\end{remark}

\section{Conclusions} \label{sec:con}

In this work, we establish a general framework for analyzing the convergence of inexact two-level methods, in which the coarse-level system is solved approximately by an inner iterative procedure. The framework allows us to use linear, nonlinear, deterministic, randomized, or hybrid solvers in the inner iterations, as long as the corresponding accuracy estimates are available. Two examples of coarse solver are also provided, followed by discussions on how their accuracy estimates fit into our framework. Motivated by the proposed theory, we expect to develop new multilevel algorithms in the future, especially combined with randomization techniques.

\section*{Acknowledgment}

The author is grateful to Prof.~Chen-Song Zhang (LSEC, Academy of Mathematics and Systems Science, Chinese Academy of Sciences) for his helpful suggestions.

\bibliographystyle{amsplain}

\end{document}